\newtheorem{theorem}{Theorem}[section]
\newtheorem{lemma}[theorem]{Lemma}
\theoremstyle{definition}
\newtheorem{definition}[theorem]{Definition}
\newtheorem{remark}[theorem]{Remark}
\numberwithin{equation}{section}
\begin{document}


\baselineskip=17pt


\title[A generalized second main theorem for closed subschemes]{A generalized second main theorem for closed subschemes}

\author[L. Wang]{Liang Wang}

\address[Liang Wang]{Department of Mathematics\\ Nanchang University\\
Nanchang 330031, P. R. China}

\email{wangliang@email.ncu.edu.cn}

\author[T. B. Cao]{Tingbin Cao}\thanks{The corresponding author: Tingbin Cao}

\address[Tingbin Cao]{Department of Mathematics\\ Nanchang University\\
Nanchang 330031, P. R. China}

\email{tbcao@ncu.edu.cn (the corresponding author)}

\author[H. Z. Cao]{Hongzhe Cao}

\address[Hongzhe Cao]{Department of Mathematics\\ Nanchang University\\
Nanchang 330031, P. R. China}

\email{hongzhecao@ncu.edu.cn}

\date{}

\begin{abstract}
Let $Y_{1}, \ldots, Y_{q}$ be closed subschemes which are located in $\ell$-subgeneral position with index $\kappa$ in a complex projective variety $X$ of dimension $n.$ Let $A$ be an ample Cartier divisor on $X.$ We obtain that if a holomorphic curve $f:\mathbb C \to X$ is Zariski-dense, then for every $\epsilon >0,$
\begin{eqnarray*}
\sum^{q}_{j=1}\epsilon_{Y_{j}}(A)m_{f}(r,Y_{j})\leq_{exc} \left(\frac{(\ell-n+\kappa)(n+1)}{\kappa}+\epsilon\right)T_{f,A}(r).
\end{eqnarray*}This generalizes  the second main theorems for general position case due to Heier-Levin [AM J. Math. 143(2021), no. 1, 213-226] and subgeneral position case due to He-Ru [J. Number Theory 229(2021), 125-141]. In particular, whenever all the $Y_j$ are reduced to Cartier divisors, we also give a second main theorem with the distributive constant. The corresponding Schmidt's subspace theorem for closed subschemes in Diophantine approximation is also given.
\end{abstract}

\subjclass[2020]{Primary 32H30, 30D35; Secondary 11J87, 11J97, 14C20}

\keywords{Closed subschemes, Second main theorem, Holomorphic curves, Cartier divisors, Schmidt's subspace theorem}

\maketitle

\section{Introduction and main results}
 In 1933, H. Cartan \cite{Ca33} established the important second main theorem for linearly nondegenerate holomorphic mappings from $\mathbb {C}^m $ into
$\mathbb {P}^{N}(\mathbb {C})$ intersecting hyperplanes in general position. He also conjectured that it should be true for hyperplanes in subgeneral position. In 1983, Nochka \cite{No83} confirmed the Cartan's conjecture by introducing the Nochka weight and Nochka constant. In 2009, Ru \cite{Ru09} extended the Cartan's second main theorem to the case of Cartier divisors in complex projective variety as follows (For more background of Nevanlinna theory, we refer to \cite{ru}). Here we use notations from Nevanlinna theory which can be found in Section 2.

\begin{theorem}\label{T1.1}\cite{Ru09}
Let $X$ be a complex projective variety of dimension $n$ and $D_{1},\ldots,D_{q}$ be effective Cartier divisors on $X,$ located in general position.
Suppose that there exists an ample Cartier divisor $A$ on $X$ and positive integers $d_{j}$ such that $D_{j}\sim d_{j}A$ for $j=1,\ldots,q.$ Let
$f:\mathbb {C}\rightarrow  X$ be a holomorphic map with Zariski dense image. Then, for every $\epsilon > 0,$
 \begin{eqnarray*}
\sum^{q}_{j=1}\frac{1}{d_{j}}m_{f}(r,D_{j})\leq_{exc} (n+1+\epsilon)T_{f,A}(r),
\end{eqnarray*}
where $\leq_{exc}$ means the inequality holds for all $r \in \mathbb{R}^{\geq 0}$ outside a set of finite Lebesgue measure.
 \end{theorem}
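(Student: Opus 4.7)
My plan is to deduce Theorem \ref{T1.1} from Cartan's classical second main theorem for hyperplanes, by embedding $X\hookrightarrow\mathbb P^{N-1}$ via the complete linear system $|mA|$ (for $m$ large) and running a filtration argument on $H^0(X,mA)$ that exploits the general-position hypothesis to translate proximity to the divisors $D_j$ into proximity to suitable coordinate hyperplanes of the embedding.

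\textbf{Step 1 (embedding).} For $m\gg 0$, the line bundle $mA$ is very ample; let $\Phi_m\colon X\hookrightarrow\mathbb P^{N-1}$ be the corresponding embedding with $N:=h^0(X,mA)$. Then $\Phi_m^*\mathcal O(1)\cong\mathcal O_X(mA)$, so $F:=\Phi_m\circ f\colon\mathbb C\to\mathbb P^{N-1}$ satisfies $T_F(r)=m\,T_{f,A}(r)+O(1)$. Zariski-density of $f$ together with the fact that $\Phi_m(X)$ linearly spans $\mathbb P^{N-1}$ make $F$ linearly non-degenerate.

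\textbf{Step 2 (filtration estimate).} Fix $x\in X$. By the general-position assumption at most $n$ of the $D_j$, say $D_{j_1},\ldots,D_{j_k}$ with $k\leq n$, pass through $x$. For $\mathbf a\in\mathbb Z_{\geq 0}^k$ put $W_\mathbf a:=\{s\in H^0(X,mA):\operatorname{ord}_{D_{j_i}}(s)\geq a_i,\;i=1,\dots,k\}$. Since $D_{j_i}\sim d_{j_i}A$, asymptotic Riemann--Roch gives
\[
\dim W_\mathbf a=\tfrac{A^n}{n!}\bigl(m-\textstyle\sum_i a_id_{j_i}\bigr)_+^n+O(m^{n-1}).
\]
Summation over the lexicographic ordering produces a basis $\psi_1^{(x)},\ldots,\psi_N^{(x)}$ of $H^0(X,mA)$ satisfying
\[
\sum_{l=1}^N\operatorname{ord}_{D_{j_i}}(\psi_l^{(x)})\;\geq\;\tfrac{mN}{(n+1)d_{j_i}}(1-\epsilon_m),\qquad i=1,\ldots,k,
\]
with $\epsilon_m=O(1/m)$. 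Expanding $\psi_l^{(x)}$ locally in terms of defining sections of the $D_{j_i}$ and using very ampleness (so that some $\psi_{l_0}^{(x)}$ is nonzero at $x$) gives the pointwise Weil-function inequality
\[
\sum_{l=1}^N\lambda_{H_l^{(x)}}\!\bigl(\Phi_m^{(x)}(y)\bigr)\;\geq\;\tfrac{mN(1-\epsilon_m)}{n+1}\sum_{j=1}^q\tfrac{\lambda_{D_j}(y)}{d_j}-O(1),
\]
valid for $y$ in a neighbourhood of $x$, where $\Phi_m^{(x)}$ is the embedding via $\{\psi_l^{(x)}\}$ and $H_l^{(x)}\subset\mathbb P^{N-1}$ are the corresponding coordinate hyperplanes.

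\textbf{Step 3 (Cartan and conclusion).} The $N$ hyperplanes $H_l^{(x)}$ are in general position in $\mathbb P^{N-1}$, and $F^{(x)}:=\Phi_m^{(x)}\circ f$ is linearly non-degenerate, so Cartan's SMT gives, for any $\epsilon'>0$,
\[
\sum_{l=1}^N m_{F^{(x)}}(r,H_l^{(x)})\;\leq_{exc}\;(N+\epsilon')\,m\,T_{f,A}(r)+O(1).
\]
By compactness of $X$ we cover it by finitely many neighbourhoods $U_{x_1},\ldots,U_{x_s}$ of the form above. Combining the Step-2 pointwise inequalities with the Cartan bounds for the curves $F^{(x_\alpha)}$ via a circle-partition / pointwise-maximum argument produces a bound of shape
\[
\sum_{j=1}^{q}\frac{m_f(r,D_j)}{d_j}\;\leq_{exc}\;\frac{(n+1)(N+\epsilon')}{N(1-\epsilon_m)}\,T_{f,A}(r)+o(T_{f,A}(r)),
\]
whose coefficient tends to $n+1$ as $m\to\infty$ and $\epsilon'\to 0$. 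Fixing $\epsilon>0$, choosing $m$ large and $\epsilon'$ small, yields the desired $(n+1+\epsilon)\,T_{f,A}(r)$.

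\textbf{Main obstacle.} The heart of the proof is Step 2: constructing a basis of $H^0(X,mA)$ whose total vanishing orders along each $D_j$ through $x$ reach the Riemann--Roch prediction $\tfrac{A^n}{d_j(n+1)!}m^{n+1}$ to leading order, with error uniform in $x$. The general-position hypothesis is indispensable, confining the multi-index filtration to $\leq n$ variables, which is exactly what produces the limiting constant $(n+1)$. A delicate secondary point is reconciling the $x$-dependence of the adapted basis with the single curve $F$ required by Cartan's SMT; managing this combinatorial overhead without losing the leading constant (rather than trivially picking up a factor equal to the cover size) is what distinguishes the proof from a naive application of Cartan's theorem, and is the technical core of Ru's original argument.
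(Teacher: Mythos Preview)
The paper does not contain a proof of Theorem~\ref{T1.1}: it is quoted from Ru \cite{Ru09} as a known input and is used as a black box inside the proof of Theorem~\ref{T1.6} (applied to the hyperplane sections $\phi^*\hat H_0,\ldots,\phi^*\hat H_n$ after the Seshadri/very-ample reduction). So there is no ``paper's own proof'' to compare against.

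Your sketch is, in broad strokes, the strategy of Ru's original argument in \cite{Ru09}: embed by $|mA|$, build for each point $x$ a filtered basis of $H^0(X,mA)$ whose aggregate vanishing along the $D_{j_i}$ through $x$ matches the Riemann--Roch prediction, translate this into a pointwise Weil-function inequality, and feed the resulting finitely many hyperplane collections into Cartan's second main theorem. You have also correctly identified the two genuine difficulties (the asymptotic filtration count and the passage from $x$-dependent bases to a single application of Cartan without losing the constant). As written, Step~2 and Step~3 are only signposts rather than proofs: the displayed lower bound $\sum_l \operatorname{ord}_{D_{j_i}}(\psi_l^{(x)}) \geq \tfrac{mN}{(n+1)d_{j_i}}(1-\epsilon_m)$ requires a nontrivial combinatorial/lattice-point argument over the simplex $\{\sum a_i d_{j_i}\le m\}$, and the ``circle-partition / pointwise-maximum'' step hides the use of a finite hyperplane family (with a max over finitely many index sets inside the integral) to which Cartan/Vojta's general-position SMT is applied once. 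If you intend this as an independent proof you would need to fill those in; if you intend it as context for the present paper, it suffices to cite \cite{Ru09}, as the authors do.
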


In 2017, Ru and Wang \cite{RW17} considered the case of closed subschemes and gave the second main theorem as follows.\par

\begin{theorem}\label{T1.2}\cite{RW17}
Let $X$ be a projective variety. Let $Y_{1},\ldots,Y_{q}$ be closed subschemes of $X$ such that at most $\ell$ of the closed subschemes meet at any point
$x\in X.$ Let $A$ be a big Cartier divisor on $X.$ Let $f : \mathbb{C} \rightarrow X$ be a holomorphic curve with Zariski-dense image. Let
\begin{eqnarray*}
\beta_{A,Y_{j}} = \lim_{N \rightarrow \infty}\frac{\sum_{m=1}^{\infty}h^{0}(\tilde{X}_{j},N\pi^{*}_{j}A-mE_{i})}{Nh^{0}(X,NA)},~j= 0,\ldots,q,
\end{eqnarray*}
where $\pi_{j} :\tilde{X}_{j} \rightarrow X$ is the blowing-up of $X$ along $Y_{j},$ with associated exceptional divisor $E_{j}.$  Then,
for every $\epsilon > 0,$
 \begin{eqnarray*}
\sum^{q}_{j=1}m_{f}(r,Y_{j})\leq_{exc} \ell(\max_{1\leq j\leq q}\{\beta^{-1}_{A,Y_{j}}\}+\epsilon)T_{f,A}(r),
\end{eqnarray*}
where $\leq_{exc}$ means the inequality holds for all $r \in \mathbb{R}^{\geq 0}$ outside a set of finite Lebesgue measure.
\end{theorem}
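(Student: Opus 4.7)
The plan is to follow Ru's filtration strategy from Theorem~\ref{T1.1} and replace linear systems of Cartier divisors by linear systems on the blow-ups $\pi_j:\tilde X_j\to X$. For each large $N$, I would produce a basis of $V_N:=H^0(X,NA)$ well-adapted to the vanishing of sections along each $Y_j$, use it to define the composition $F_N:=\Phi_{|NA|}\circ f:\mathbb{C}\to\mathbb{P}(V_N)$, and then apply Cartan's second main theorem (or an Evertse--Ferretti type refinement) to $F_N$ with hyperplanes pulled back from the chosen basis.

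For each closed subscheme $Y_j$, sections in $H^0(\tilde X_j, N\pi_j^*A - mE_j)$ push forward to sections of $H^0(X,NA)$ that ``vanish to order at least $m$'' along $Y_j$ in the sense of the ideal-sheaf filtration; this yields a descending filtration
$$V_N=V_{N,j}^{0}\supseteq V_{N,j}^{1}\supseteq\cdots\supseteq V_{N,j}^{m}\supseteq\cdots.$$
First I would extract from each filtration a basis $\{s_{j,1},\ldots,s_{j,h^0(X,NA)}\}$ of $V_N$ compatible with $V_{N,j}^{\bullet}$. The key quantitative input is that a section $s\in V_{N,j}^m\setminus V_{N,j}^{m+1}$ contributes at least $m\cdot\lambda_{Y_j}(x)+O(1)$ to $-\log\|s(x)\|$ in local trivializations, where $\lambda_{Y_j}$ is a Weil function for $Y_j$ in the sense of Silverman. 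Summing vanishing orders across this basis bounds the total contribution along $Y_j$ by $\sum_{m\geq 1}\dim V_{N,j}^m$.

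Next, the hypothesis that at most $\ell$ of the $Y_j$ meet at any point ensures that for any $x\in X$ only $\ell$ of the Weil functions $\lambda_{Y_j}(x)$ can be large simultaneously, so only $\ell$ of the terms in $\sum_j m_f(r,Y_j)$ contribute substantively at any $r$. I would then apply Cartan's second main theorem to the hyperplane arrangement in $\mathbb{P}(V_N)$ consisting of the duals of the chosen bases over all $j$. Combining the combinatorial ``at most $\ell$'' bound with the filtration-sum estimate above gives an upper bound whose coefficient is controlled by $\ell\cdot\sum_{m\geq 1}h^0(\tilde X_j, N\pi_j^*A-mE_j)/(Nh^0(X,NA))$; letting $N\to\infty$ converts this into $\ell\cdot\max_j\beta_{A,Y_j}^{-1}$, as required.

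The main obstacle, as usual in these constructions, is the simultaneous-basis step: the filtrations $V_{N,j}^{\bullet}$ do not jointly diagonalize, so there is no single global basis compatible with all $j$ at once. The workaround, following Autissier and Ru, is a point-by-point argument: at $x\in X$ only the $Y_j$ whose Weil functions are large there contribute, and by the ``at most $\ell$'' hypothesis one needs only to refine at most $\ell$ filtrations simultaneously, which a standard nested-refinement linear-algebra argument achieves. Stitching these local refinements into a global estimate on $\sum_j m_f(r,Y_j)$, and carefully controlling the error when passing from the finite-$N$ ratio to the limit defining $\beta_{A,Y_j}$, is where the technical work will concentrate.
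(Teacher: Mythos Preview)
The paper does not contain a proof of this statement: Theorem~\ref{T1.2} is quoted from \cite{RW17} as background, and no argument for it appears anywhere in the paper. There is therefore nothing to compare your proposal against within this paper.

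That said, your sketch is broadly in the spirit of the actual Ru--Wang argument in \cite{RW17}: one works on the blow-up, uses the filtration of $H^0(X,NA)$ by order of vanishing along $E_j$, and feeds the resulting basis into a general second main theorem (in their case, the Ru--Vojta type ``birational'' inequality rather than Cartan directly). One point to be careful about: the ``at most $\ell$ meet at any point'' hypothesis is handled in \cite{RW17} not by a simultaneous-basis refinement over $\ell$ filtrations, but more crudely---at each point one simply keeps the $\ell$ largest Weil functions and drops the rest, then treats each $Y_j$ separately via its own blow-up and its own application of the second main theorem, summing at the end with a factor of $\ell$. Your proposed joint-refinement step is more ambitious than what is actually needed and is where your sketch is vaguest; the published proof sidesteps it entirely.
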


When the closed subschemes $Y_{j} = y_{j}$ are distinct points in $X$ and dim$X=n,$ then one may take $\ell =1$ in Theorem \ref{T1.2}, Mckinnon and Roth \cite{MR15}
have shown that the Seshadri constants $\epsilon_{y_{j}}(A)$ and $\beta_{A,y_{j}}$ have the following relation:
$$\beta_{A,y_{j}}\geq \frac{n}{n+1}\epsilon_{y_{j}}(A).$$

In order to improve the condition  $D_{j}\sim d_{j}A$ in Theorem \ref{T1.1},  Heier and Levin \cite{HL21}  used the notion of Seshadri constant $\epsilon_{D_{j}}(L)$ (see Definition \ref{D2.1}), and obtained the following generalization of the second main Theorem for closed subschemes in general position.

 \begin{theorem}\label{T1.3}\cite[Theorem 1.8]{HL21}
 Let $X$ be a complex projective variety of dimension $n.$ Let $Y_{0},\ldots,Y_{q}$ be closed subschemes of $X$ in general position,
$f:\mathbb{C}\rightarrow X$ be a holomorphic map with Zariski dense image. Let $A$ be an ample Cartier divisor on $X.$ Then, for every $\epsilon > 0,$
\begin{eqnarray*}
\sum^{q}_{j=0}\epsilon_{Y_{j}}(A)m_{f}(r,Y_{j})\leq_{exc}(n+1+\epsilon)T_{f,A}(r),
\end{eqnarray*}
where $\leq_{exc}$ means the inequality holds for all $r \in \mathbb{R}^{\geq 0}$ outside a set of finite Lebesgue measure.
\end{theorem}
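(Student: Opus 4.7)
The plan is to reduce Theorem~\ref{T1.3} to Ru's second main theorem for Cartier divisors in general position (Theorem~\ref{T1.1}), by using the Seshadri constants to manufacture effective Cartier divisors $D_j\in|NA|$ whose vanishing order along $Y_j$ is controlled by $\epsilon_{Y_j}(A)$. First I would fix $\epsilon>0$ and pick rationals $\gamma_j<\epsilon_{Y_j}(A)$ close enough that the final loss will fit inside $\epsilon$. With $\pi_j\colon\tilde X_j\to X$ the blowup of $X$ along $Y_j$ and $E_j$ its exceptional Cartier divisor, the defining property of the Seshadri constant says that the $\mathbb{Q}$-divisor $\pi_j^*A-\gamma_jE_j$ is ample, so that for every sufficiently divisible $N$ with $N\gamma_j\in\mathbb{Z}$ the line bundle $N(\pi_j^*A-\gamma_jE_j)$ is very ample. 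Using the identification
\[
 H^0\bigl(\tilde X_j,\,N(\pi_j^*A-\gamma_j E_j)\bigr)\;\cong\;H^0\bigl(X,\,\mathcal{O}_X(NA)\otimes\mathcal{I}_{Y_j}^{\,N\gamma_j}\bigr),
\]
this furnishes many global sections of $NA$ vanishing along $Y_j$ to order at least $N\gamma_j$.

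Next, for each $j$ I would choose a general section in $H^0(X,\mathcal{O}_X(NA)\otimes\mathcal{I}_{Y_j}^{\,N\gamma_j})$ and let $D_j\in|NA|$ denote its zero divisor, so that $\mathrm{mult}_{Y_j}(D_j)\geq N\gamma_j$. Because $Y_0,\dots,Y_q$ are in general position, no $n+1$ of them share a common point, and a Bertini-type moving argument applied inside each of these linear subsystems produces $D_0,\dots,D_q$ that are themselves in general position on $X$: the forced intersections along $\bigcap_{j\in S}Y_j$ vanish for $|S|=n+1$ by hypothesis, and the ampleness of $\pi_j^*A-\gamma_jE_j$ supplies enough freedom to move the remaining part of $D_j$ transversely to the others. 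Applying Theorem~\ref{T1.1} to these $D_j$, with $d_j=N$, then gives
\[
\sum_{j=0}^{q}\frac{1}{N}\,m_f(r,D_j)\;\leq_{exc}\;\bigl(n+1+\tfrac{\epsilon}{2}\bigr)T_{f,A}(r).
\]

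To translate this back to the subschemes $Y_j$, I would invoke the standard functoriality of Weil/proximity functions: if an effective Cartier divisor $D$ contains a closed subscheme $Y$ with multiplicity at least $m$, then $m_f(r,D)\geq m\,m_f(r,Y)+O(1)$. With $m=N\gamma_j$ this yields
\[
\sum_{j=0}^{q}\gamma_j\, m_f(r,Y_j)\;\leq_{exc}\;\bigl(n+1+\tfrac{\epsilon}{2}\bigr)T_{f,A}(r),
\]
and letting $\gamma_j\uparrow\epsilon_{Y_j}(A)$ concludes the proof.

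The main obstacle I expect is the general-position step. The linear subsystem $|NA\otimes\mathcal{I}_{Y_j}^{\,N\gamma_j}|$ is automatically forced to contain divisors passing through $Y_j$, so it is not immediately clear that generic choices yield $D_0,\dots,D_q$ in general position on all of $X$. This has to be argued carefully using ampleness of $\pi_j^*A-\gamma_jE_j$ on the blowup (to separate points away from $E_j$), together with the general-position hypothesis on the $Y_j$ (to kill the unavoidable common intersections). The final Weil-function comparison is by now standard, having been developed in the work of Silverman, Vojta, and Ru--Wang~\cite{RW17}, and should not pose serious difficulty.
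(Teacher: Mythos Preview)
Your proposal is correct and follows essentially the same route as the paper (which proves the more general Theorem~\ref{T1.7} specializing to this statement): reduce to Ru's Theorem~\ref{T1.1} by producing effective divisors $D_j\sim N(1+\delta)A$ with $\pi_j^*D_j\ge N\gamma_j E_j$, then compare Weil functions. The paper carries out the general-position step you flag as the main obstacle via an explicit inductive construction (choosing each new section to avoid all irreducible components of the finitely many relevant intersections, using Lemma~\ref{L3.5}), and uses Lemma~\ref{L5.1} to justify the identification $H^0(\tilde X_j,N\pi_j^*A-N\gamma_jE_j)\cong H^0(X,\mathcal{O}_X(NA)\otimes\mathcal{I}_{Y_j}^{N\gamma_j})$ for $N$ large; the perturbation $(1+\delta)A$ in place of $A$ is used to pass cleanly from nef to ample.
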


In 2021, He and Ru \cite{HR21} considered the $\ell$-subgeneral position case and generalized Theorem \ref{T1.3} by using somewhat different methods. In this paper, we continue to generalize them to the case of the index $\kappa$ of $\ell$-subgeneral position. The concept of the index $\kappa$ of $\ell$-subgeneral position by  Ji-Yan-Yu \cite{JYY19} is generalized from Cartier divisors to closed subschemes.\par

\begin{definition}\label{D2.6}Let $X$ be a projective variety of dimension $n$ which is defined over an arbitrary field $k$ with characteristic zero and let $Y_{1}, \ldots, Y_{q}$ be $q$ closed subschemes of $X.$ Let $\ell$ and $\kappa$ be two positive integers such that $\ell\geq\dim X\geq \kappa.$\par

(a). The closed subschemes $\{Y_{1}, \ldots, Y_{q}\}$  are said to be in general position in $X$ if for any subset $I\subset\{1, \ldots, q\}$ with $\sharp I\leq \dim X+1,$
$$codim\left(\bigcap_{i\in I} Y_{i}\cap X\right)\geq\sharp I.$$\par

(b). The closed subschemes $\{Y_{1}, \ldots, Y_{q}\}$  are said to be in $\ell$-subgeneral position in $X$ if for any subset $I\subset\{1, \ldots, q\}$ with $\sharp I\leq \ell+1,$
$$\dim\left(\bigcap_{i\in I}Y_{i}\cap X\right)\leq \ell-\sharp I.$$\par

(c). The closed subschemes $Y_{1}, \ldots, Y_{q}$ are said to be in $\ell$-subgeneral position with index $\kappa$ if $Y_{1}, \ldots, Y_{q}$ are in $\ell$-subgeneral position and for any subset $J\subset \{1,\ldots,q\}$ with $\sharp I \leq \kappa,$
$$codim \left(\bigcap_{i\in I}Y_{i}\cap X\right)\geq \sharp I.$$
\end{definition}

\begin{theorem}\label{T1.7}
Let $f:\mathbb{C}\rightarrow $X$ $ be a meromorphic map with Zariski dense image, where $X$ is a complex projective variety of dimension $n.$ Let $Y_{1},\ldots,Y_{q}$
be closed subschemes which is located in $\ell$-subgeneral position with index $\kappa$ in $X,$ and $\ell \ge n.$ Let $A$ be an ample Cartier divisor on $X.$ Then,
for any $\epsilon >0,$
\begin{eqnarray*}\label{E1.13}
\sum^{q}_{j=1}\epsilon_{Y_{j}}(A)m_{f}(r,Y_{j})\leq_{exc} \left(\frac{(\ell-n+\kappa)(n+1)}{\kappa}+\epsilon\right)T_{f,A}(r),
\end{eqnarray*}
where $\leq_{exc}$ means the inequality holds for all $r \in \mathbb{R}^{\geq 0}$ outside a set of finite Lebesgue measure.
\end{theorem}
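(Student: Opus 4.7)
The plan is to combine the Seshadri-constant reduction of Heier--Levin \cite{HL21} with an index-$\kappa$ Nochka-style weighting in the spirit of Ji--Yan--Yu \cite{JYY19}, adapted from Cartier divisors to closed subschemes so that the general-position second main theorem (Theorem \ref{T1.3}) can be invoked as a black box.

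First I would fix $\epsilon > 0$ and use the definition of $\epsilon_{Y_j}(A)$ to reduce the subscheme proximities to divisorial ones. For $N$ large enough, sections of $N\pi_j^* A - \lfloor N(\epsilon_{Y_j}(A)-\epsilon)\rfloor E_j$ on the blow-up $\tilde X_j$ descend to effective Cartier divisors $D_j$ on $X$ containing $Y_j$, yielding a height inequality of the form $\epsilon_{Y_j}(A)\,m_f(r,Y_j) \leq m_f(r,D_j) + \epsilon\,T_{f,A}(r)$ with $D_j$ in a positive rational multiple of $|A|$. This brings the problem into the Cartier-divisor framework while preserving the position hypotheses of Definition \ref{D2.6}.

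Next I would tackle the combinatorial core. For each point $x$ in the image of $f$, the $\ell$-subgeneral position guarantees at most $\ell$ indices $j$ with $x \in Y_j$, while the index-$\kappa$ assumption forces any $\kappa$-subset of those to have codimension $\kappa$. I plan to construct, for each such $x$, a subfamily $J_x \subset \{1,\ldots,q\}$ of size at most $n+1$ whose members are in general position, together with Nochka-type weights $\omega_j \in (0,1]$ bounded below by $\kappa/(\ell-n+\kappa)$, so that Theorem \ref{T1.3} applied to the $D_j$ with $j\in J_x$ yields
\begin{equation*}
\sum_{j=1}^{q} \omega_j\,\epsilon_{Y_j}(A)\,m_f(r,Y_j) \leq (n+1+\epsilon)\,T_{f,A}(r).
\end{equation*}
Dividing through by the minimal weight $\kappa/(\ell-n+\kappa)$ then produces the announced coefficient $(\ell-n+\kappa)(n+1)/\kappa$.

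The hard part will be constructing these index-$\kappa$ Nochka weights for closed subschemes rather than for divisors. In the divisor setting one exploits a linear-algebraic interpretation of codimension via independent defining sections \cite{JYY19}, which is unavailable for arbitrary subschemes. I expect to need a filtration or blow-up argument that intertwines the Seshadri reduction of step one with the codimension tracking demanded by Definition \ref{D2.6}(c); a refinement of the moving-lemma technique from \cite{HL21} should provide the bridge. Once the weights are in hand, standard Nevanlinna integration assembles everything into the claimed bound up to the exceptional set.
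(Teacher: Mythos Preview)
Your Seshadri-constant reduction in step one matches the paper's opening move, but you gloss over a point the paper treats carefully: the divisors $D_j$ produced from sections on the blow-ups must themselves remain in $\ell$-subgeneral position with index $\kappa$ on $X$. The paper secures this by an inductive construction (the ``claim'' in the proof of Theorem \ref{T1.7}), choosing each section generically so that no irreducible component of a relevant intersection is swallowed, and invokes Lemma \ref{L5.1} to identify $H^0(\tilde X_j,\mathcal O_{\tilde X_j}(N(1+\delta)\pi_j^*A-N(\epsilon_j+\delta')E_j))$ with a twisted space of sections on $X$. A generic section alone does not automatically preserve the index-$\kappa$ condition; this needs the dimension-tracking argument via Lemma \ref{L3.5}.

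The larger divergence is in your combinatorial core. The paper does \emph{not} build Nochka weights at all. Instead it first proves a second main theorem for Cartier divisors governed by the distributive constant $\Delta$ (Theorem \ref{T1.6}), using Quang's Lemmas \ref{L3.3} and \ref{L3.4} to pass from a family of hyperplane sections in $(t_1,\ldots,t_n)$-subgeneral position to one in general position, and then applies Ru's Theorem \ref{T1.1}. The index-$\kappa$ hypothesis enters only through the elementary bound $\Delta \le (\ell-n+\kappa)/\kappa$ of Remark \ref{R3.3}, so once the Seshadri reduction has produced divisors $F_j\sim N(1+\delta)A$ in $\ell$-subgeneral position with index $\kappa$, Theorem \ref{T1.6} yields the stated constant directly.

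Your Nochka-weight route is not absurd, but the step you yourself flag as ``the hard part'' is a genuine gap in the outline: Nochka's construction is linear-algebraic and tailored to hyperplanes, and nothing in \cite{JYY19} hands you weights with the precise lower bound $\kappa/(\ell-n+\kappa)$ for arbitrary very-ample divisors or subschemes. The paper's distributive-constant method bypasses this entirely, replacing the weight construction with the elementary inequality of Lemma \ref{L3.3} once the embedding into $\mathbb P^{\tilde N}$ via $|N(1+\delta)A|$ has converted the $F_j$ into hyperplane sections. If you insist on the Nochka direction you owe an actual construction of those weights; the distributive-constant route is both shorter and already complete.
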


To prepare for the proof of Theorem \ref{T1.7}, we need firstly to obtain a second main theorem for Cartier divisors with distributive constant, which is also an interesting topic independently. The concept of distributive constant was originally given by Quang\cite{Qu20}, and we extend it to the case of Cartier divisors as follows.\par

\begin{definition}\label{D1.5}
Let $X$ be a projective variety of dimension $n$ defined over over arbitrary field $k$ with characteristic zero and let $D_{1},\ldots,D_{q}$ be $q$ Cartier divisors of $X.$ The distributive constant $\Delta $ of
$\{D_{1},\ldots,D_{q}\}$ in $X$ is defined by

$$\Delta := \max_{\varGamma \subset \{1,\ldots,q\}}\frac{\sharp \varGamma}{n-dim((\cap_{j \in \varGamma}Supp D_{j})\cap X(\overline{k}))}.$$Here, we note that $dim~\emptyset = - \infty.$
\end{definition}

The second main theorem for holomorphic curves intersecting Cartier divisors with distributive constant in a complex projective variety is stated as follows.

\begin{theorem}\label{T1.6}
Let $X$ be a complex projective variety of dimension $n.$ Let $D_{1}, \ldots, D_{q}$ be Cartier divisors of $X$ with the distributive constant $\Delta$
in $X.$ Let $A$ be an ample Cartier divisor on $X.$ Let $f:\mathbb C \to X$ be a holomorphic curve with Zariski-dense image. Then, for every $\epsilon >0,$
\begin{eqnarray*}\label{E1.2}
\sum^{q}_{j=1}\epsilon_{D_{j}}(A)m_{f}(r,D_{j})\leq_{exc} (\Delta(n+1)+\epsilon))T_{f,A}(r),
\end{eqnarray*}
where $\leq_{exc}$ means the inequality holds for all $r \in \mathbb{R}^{\geq 0}$ outside a set of finite Lebesgue measure.
\end{theorem}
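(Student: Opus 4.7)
Our plan is to combine the Seshadri constant reduction of Heier--Levin~\cite{HL21} with the second main theorem for hyperplanes on projective subvarieties with distributive constant (in the spirit of Quang~\cite{Qu20}): first replace each $D_j$ by an effective divisor $E_j$ in a common very ample linear system $|NA|$; then verify that this replacement does not worsen the distributive constant; and finally apply a Quang-type SMT via projective embedding. Given $\epsilon>0$, we choose rationals $\tau_j\in(\epsilon_{D_j}(A)-\epsilon/q,\epsilon_{D_j}(A))$ with $A-\tau_jD_j$ nef, and then a large integer $N$ with $N\tau_j\in\mathbb{Z}$ such that $NA$ is very ample. Picking effective $G_j\in|NA-N\tau_jD_j|$ and setting $E_j:=G_j+N\tau_jD_j\in|NA|$ gives $E_j\geq N\tau_jD_j$, hence $m_f(r,E_j)\geq N\tau_j\,m_f(r,D_j)+O(1)$.

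For the key distributive-constant bookkeeping, we choose the $G_j$ generically in their linear systems. For every $\Gamma\subseteq\{1,\ldots,q\}$,
$$\bigcap_{j\in\Gamma}\mathrm{Supp}\,E_j=\bigcup_{S\subseteq\Gamma}\Bigl(\bigcap_{j\in S}\mathrm{Supp}\,D_j\cap\bigcap_{j\in\Gamma\setminus S}\mathrm{Supp}\,G_j\Bigr),$$
and a Bertini-type dimension count shows that generically each piece has the expected dimension $\dim(\bigcap_{j\in S}\mathrm{Supp}\,D_j)-|\Gamma\setminus S|$. Invoking $\dim(\bigcap_{j\in S}\mathrm{Supp}\,D_j)\leq n-|S|/\Delta$ from the distributive constant of $\{D_j\}$, and using $(1-1/\Delta)\geq 0$ to conclude the maximum over $S$ is attained at $|S|=|\Gamma|$, we obtain $\dim(\bigcap_{j\in\Gamma}\mathrm{Supp}\,E_j)\leq n-|\Gamma|/\Delta$. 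Hence the distributive constant of $\{E_j\}$ is also at most $\Delta$.

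Since $NA$ is very ample, $|NA|$ embeds $X$ into $\mathbb{P}^M$ via a morphism $\phi$ with $E_j=\phi^*H_j$ for hyperplanes $H_j\subset\mathbb{P}^M$. Applying a Quang-type SMT for hyperplanes on the $n$-dimensional subvariety $\phi(X)$ with distributive constant $\leq\Delta$ to the Zariski-dense holomorphic curve $\phi\circ f:\mathbb{C}\to\phi(X)$, and unwinding through the identifications $m_{\phi\circ f}(r,H_j)=m_f(r,E_j)+O(1)$ and $T_{\phi\circ f}(r)=NT_{f,A}(r)+O(1)$, yields
$$\sum_{j=1}^{q}m_f(r,E_j)\leq_{exc}(\Delta(n+1)+\epsilon)NT_{f,A}(r).$$
Combining with the Seshadri estimate of the first step, using $\tau_j>\epsilon_{D_j}(A)-\epsilon/q$, dividing by $N$, and absorbing the small errors yields the claimed inequality.

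The main obstacle is the second step---verifying that a single generic choice of the $G_j$ preserves the distributive constant simultaneously across every $\Gamma\subseteq\{1,\ldots,q\}$. This reduces to a careful Bertini-type argument carried out uniformly over the finitely many intersection patterns $(\Gamma,S)$, combined with the elementary convexity inequality above; once these are in place, the remainder of the proof is a routine formal manipulation of proximity functions and characteristic functions under the embedding $\phi$.
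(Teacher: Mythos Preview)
Your proposal is correct and follows essentially the same strategy as the paper: Heier--Levin's Seshadri reduction to put all divisors into a single very ample linear system, a check that this replacement does not increase the distributive constant, and then a hyperplane SMT on the embedded variety via Quang's machinery. The implementation differs in two places: (i) you preserve $\Delta$ by a global Bertini argument on the decomposition $\bigcap_{j\in\Gamma}\mathrm{Supp}\,E_j=\bigcup_{S\subseteq\Gamma}\bigl(\bigcap_{j\in S}\mathrm{Supp}\,D_j\cap\bigcap_{j\in\Gamma\setminus S}\mathrm{Supp}\,G_j\bigr)$ together with the convexity bound, whereas the paper constructs the replacement divisors $F_j$ one at a time by induction, at each step choosing a section that avoids the finitely many bad irreducible components (using Lemma~\ref{L3.5}); (ii) you invoke the final hyperplane inequality as a black box ``Quang-type SMT'', while the paper unfolds that step explicitly from Lemma~\ref{L3.3}, Lemma~\ref{L3.4}, and Ru's Theorem~\ref{T1.1}. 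One small repair: your Bertini step needs $|NA-N\tau_jD_j|$ to be base-point-free, so you should observe that $A-\tau_jD_j$ is in fact $\mathbb{Q}$-\emph{ample} (it is a positive combination of the ample $A$ and a nef class on the segment to $A-\epsilon_{D_j}(A)D_j$), not merely nef as you wrote; the paper achieves the same thing by passing to $(1+\delta)A$ with an auxiliary $\delta'$.
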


If $D_{1},\ldots, D_{q}$ are in $\ell$-subgeneral position with index
$\kappa$, then by Remark \ref{R3.3} we have $\Delta \leq \frac{\ell-n+\kappa}{\kappa}.$ Hence, Theorem \ref{T1.6} generalizes Theorem \ref{T1.1},  Ji-Yan-Yu's work  \cite[Theorem 1.2]{JYY19} and He-Ru \cite[Theorem 2.6]{HR21}.\par

Whether can Definition \ref{D1.5} be generalized to the closed subschemes in order to obtain a generalized second main theorem for closed subschemes with distributive constant? This is an open question.\par

The rest of this paper is structured as follows. In section 2, we briefly recall some basic notations and definitions about the closed subscheme and Nevanlinna theory. Some preliminary lemmas and the proof of Theorem \ref{T1.6} for Cartier divisors with distributive constant are given in section 3. Then Theorem \ref{T1.7} will be proved in section 4. According to the Vojta's dictionary (\cite{VP87, VP09}) of the analogy between Nevanlinna theory and Diophantine approximation, the Schmidt's subspace theorems will be given in section 5.\par

\section{Preliminaries}

\subsection{Seshadri constants}

Let $X$ be a projective variety and $Y$  be a closed subscheme of $X,$ corresponding to a coherent sheaf of ideals $\mathcal{I}_{Y}.$ Let $\mathcal{S} =
\oplus_{d \geq 0}\mathcal{I}_{Y}^{d}$ be the sheaf of graded algebras, where $\mathcal{I}_{Y}^{d}$ is the $d$-th power of $\mathcal{I}_{Y},$ with the
convention that $\mathcal{I}_{Y}^{0} = \mathcal{O}_{X}.$ Then $\tilde{X} := Proj \mathcal{S}$ is called the blowing-up of $X$ with respect to $\mathcal{I}_{Y},$
 or, the blowing-up of $X$ along $Y.$

Let $\pi:\tilde{X}\rightarrow X$ be the blowing-up along $Y.$ From Proposition II.7.13(a) in \cite{Har77}, the inverse image ideal sheaf
$\tilde{\mathcal{I}}_{Y} = \pi^{-1}\mathcal{I}_{Y}\cdot\mathcal{O}_{\tilde{X}}$ is an invertible sheaf on $\tilde{X}.$ Let $E$ be an effective  Cartier
divisor in$\tilde{X}$ whose associated invertible sheaf is the dual of $\pi^{-1}\mathcal{I}_{Y} \cdot \mathcal{O}_{\tilde{X}}.$

\begin{definition}\label{D2.1}
Let $Y$ be a closed subscheme of a projective variety $X.$ Let $\pi :\tilde{X} \rightarrow X$ be the blowing-up of $X$ along $Y.$ Let $A$ be a nef Cartier
divisor on $X.$ We define the Seshadri constant $\epsilon_{Y}(A)$ of $Y$ with respect to $A$ to be the real number

$$\epsilon_{Y}(A) = sup \{\gamma \in \mathbb{Q}_{\geq 0} | \pi^{*}A-\gamma E~is~\mathbb{Q}-nef\},$$
where $E$ is an effective Cartier divisor on $\tilde{X}$ whose associated invertible sheaf is the dual of $\pi^{-1}\mathcal{I}_{Y}\cdot \mathcal{O}_{\tilde{X}}.$
\end{definition}

\subsection{ The Weil function and its properties}

In this section, we briefly recall the definition of the Weil function and its properties.

\begin{definition}\label{D2.2}
Let $D$ be a Cartier divisor on a complex variety $X.$ A local Weil function for $D$ is a function $\lambda_{D}:(X\setminus suppD) \rightarrow \mathbb{R}
$ such that for all $x \in X$ there is an open neighborhood $U$ of $x$ in $X,$ a nonzero rational function $f$ on $X$ with $D\mid_{U} = (f),$ and a
continuous function $\alpha : U \rightarrow \mathbb{R}$ such that

$$\lambda_{D}(x) = -\log |f(x)|+ \alpha(x)$$
for all $x \in (U \setminus Supp D)$.

A continuous (fiber) metric $||\cdot||$ on the line sheaf $\mathcal{O}_{X}(D)$ determines a Weil function for $D$ given by

$$\lambda_{D}(x) = -\log ||s(x)||$$
where $s$ is the rational section of $\mathcal{O}_{X}(D)$ such that $D = (s).$ For example, the Weil function for the hyperplanes $H=\{a_{0}x_{0}+\ldots+a_{n}x_{n} = 0\}$ is given by

$$\lambda _{H}(x)= \log \frac{\max_{0\leq i \leq n}|x_{i}|\max_{0\leq i \leq n}|a_{i}|}{|a_{0}x_{0}+\ldots+a_{n}x_{n}|}.$$

The Weil functions with respect to divisors satisfy the following properties.

(a) Additivity: If $\lambda_{1}$ and $\lambda_{2}$ are Weil functions for Cartier divisors $D_{1}$ and $D_{2}$ on $X,$ respectively, then
$\lambda_{1}+\lambda_{2}$ extends uniquely to a Weil function for $D_{1}+D_{2}.$

(b) Functoriality: If $\lambda$ is a Weil function for a Cartier divisor $D$ on $X,$ and if $\phi :X' \rightarrow X$ is a morphism such that
 $\phi(X') \not\subset supp D,$ then $x \mapsto \lambda(\phi (x))$ is a Weil function for the Cartier divisor $\phi^{*}D$ on $X'.$

(c) Uniqueness: If both $\lambda_{1}$ and $\lambda_{2}$ are Weil functions for a Cartier divisor on $X,$ then $\lambda_{1}=\lambda_{2}+O(1).$

(d) Boundedness from below: If $D$ is an effective divisor and $\lambda$ is a Weil function for $D,$ then $\lambda$ is bounded from below.
\end{definition}

For the closed subschemes case, let $Y$ be a closed subscheme of a projective variety $X.$ Then one can associate a Weil function
 $\lambda_{Y}: X \setminus Supp Y \rightarrow \mathbb {R},$ well-defined up to $O(1)$. The following lemma indicates that a closed subscheme can be expressed by some Cartier divisors.\par

\begin{lemma}\label{L2.3}
\cite[Lemma 2.2]{Si87} Let $Y$ be a closed subscheme of a projective variety $X.$ There exist effective Cartier divisors $D_{1},\ldots,D_{\ell}$
such that

$$Y = \cap_{i=1}^{\ell}D_{i}.$$

\end{lemma}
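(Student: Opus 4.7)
The plan is to exploit the projectivity of $X$ to realize the ideal sheaf $\mathcal{I}_Y$, after a suitable twist, as generated by global sections, and then to reinterpret those sections as equations of effective Cartier divisors whose common zero locus is $Y$.

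First, I would fix an ample line sheaf $\mathcal{L}$ on $X$, which exists because $X$ is projective. Since $\mathcal{I}_Y$ is coherent, Serre's generation theorem guarantees an integer $n_0$ such that for every $n \geq n_0$ the twisted sheaf $\mathcal{I}_Y \otimes \mathcal{L}^{\otimes n}$ is globally generated. Fixing such an $n$, I would then choose finitely many generating sections $s_1, \ldots, s_\ell \in H^0(X, \mathcal{I}_Y \otimes \mathcal{L}^{\otimes n})$; this is possible because $H^0$ of a coherent sheaf on a projective variety is finite-dimensional, so a finite number of sections already generates the stalk at every point.

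Next, I would view each $s_i$ as a global section of $\mathcal{L}^{\otimes n}$ (via the inclusion $\mathcal{I}_Y \otimes \mathcal{L}^{\otimes n} \hookrightarrow \mathcal{L}^{\otimes n}$) and let $D_i$ be the corresponding effective Cartier divisor, namely the zero scheme of $s_i$. On any affine open $U \subset X$ trivializing $\mathcal{L}$, each $s_i$ reads as a regular function $f_i \in \mathcal{O}_X(U)$ lying in $\mathcal{I}_Y(U)$, and by the global generation property the $f_i$ together generate $\mathcal{I}_Y(U)$. The scheme-theoretic intersection $\bigcap_{i=1}^\ell D_i$ is then cut out locally by the ideal $(f_1, \ldots, f_\ell) = \mathcal{I}_Y(U)$, giving the desired equality $Y = \bigcap_{i=1}^\ell D_i$.

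This result is essentially a packaging of Serre's theorem, so no deep obstacle arises. The only real point of care is the bookkeeping identifying a section of $\mathcal{I}_Y \otimes \mathcal{L}^{\otimes n}$, a section of $\mathcal{L}^{\otimes n}$, and a local equation in the ideal of $Y$, all via the trivialization of $\mathcal{L}$ on a small open set; this is what ensures that the effective Cartier divisors $D_i$ are well defined and that their scheme-theoretic intersection recovers $Y$ (and in particular, as needed for the Weil function applications that follow, that $\operatorname{Supp} Y = \bigcap_i \operatorname{Supp} D_i$).
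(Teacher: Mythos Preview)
Your argument is correct and is the standard proof of this fact (and is essentially the one given in Silverman's paper \cite{Si87}). The paper under review does not supply its own proof of Lemma~\ref{L2.3}; it simply quotes the statement from \cite[Lemma~2.2]{Si87}, so there is nothing to compare beyond noting that your proposal matches the original source.
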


\begin{definition}\label{D2.4}
Let $Y$ be a closed subscheme of a projective variety $X.$ We define the Weil function for $Y$ as

$$\lambda_{Y} = \min \{\lambda_{D_{1}},\ldots,\lambda_{D_{\ell}}\}+ O(1),$$
where $Y = \cap_{i=1}^{\ell}D_{i}$ (by Lemma \ref{L2.3}, such $D_{i}$ exist). Then there is a Weil function for the closed subscheme $\lambda_{Y} : X \setminus Supp Y
\rightarrow \mathbb{R},$ which does not depend on the choice of Cartier divisors.
\end{definition}

We briefly recall the natural operations on subschemes which are similar to the case of divisors, more details can be found
in \cite[section 2]{Si87}. Let $Y,Z$ be closed subschemes of $X$,

(i) The sum of $Y$ and $Z$, denoted by $Y+Z,$ is the subscheme of $X$ with ideal sheaf $\mathcal I_{Y} \mathcal{I}_{Z}$.\par
(ii) The intersection of $Y$ and $Z$, denoted by $Y \cap Z,$ is the subscheme of $X$ with ideal sheaf $\mathcal{I}_{Y}+\mathcal{I}_{Z}.$\par
(iii) The union of $Y$ and $Z$, denoted by $Y \cup Z$, is the subscheme of $X$ with ideal sheaf $\mathcal {I}_{Y} \cap \mathcal{I}_{Z}.$\par
(iv) Let $\phi : X' \rightarrow X $ be a morphism of projective varieties, the inverse image of $Y$ is the subscheme of $X'$ with ideal sheaf $\phi
^{-1}\mathcal{I}_{Y} \cdot \mathcal{O}_{X'},$ denoted by $\phi ^{*}Y.$\par

In addition, some properties of the Weil functions for closed subschemes are written as follows.

If $Y$ and $Z$ are two closed subschemes of $X$, and $\phi : X' \rightarrow X $ is a morphism of projective varieties,

(i) $\lambda_{Y \cap Z} = \min \{\lambda_{Y},\lambda_{Z}\}.$

(ii) $\lambda_{Y+Z} = \lambda_{Y}+\lambda_{Z}.$

(iii) If $Y \subset Z,$ $\lambda_{Y}\leq \lambda_{Z}.$

(iv) $\lambda_{Y}(\phi(x)) = \lambda_{\phi^{*}Y}(x).$

\begin{lemma}\label{L2.5}\cite{Si87}
Let $Y$ be a closed subscheme of $X,$ and let $\tilde X$ be the blowing-up of $V$ along $Y$ with exceptional divisor $E.$
Then for $P \in \tilde X \setminus Supp E,$
 $$\lambda_{Y}(\pi(P)) = \lambda_{E}(P)+ O_{v}(1).$$

\end{lemma}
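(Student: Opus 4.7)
The plan is to combine the functoriality of Weil functions with the identification of the inverse-image subscheme $\pi^{*}Y$ on $\tilde X$ with the effective Cartier divisor $E$ that appears in the construction of the blow-up.

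First I would invoke property (iv) of Weil functions for closed subschemes, applied to the morphism $\pi:\tilde X\to X$ and the subscheme $Y$. This gives
$$\lambda_{Y}(\pi(P))=\lambda_{\pi^{*}Y}(P)$$
for every $P\in\tilde X$ with $\pi(P)\notin\operatorname{Supp}Y$. Since $\pi^{-1}(\operatorname{Supp}Y)=\operatorname{Supp}E$, this identity is valid on $\tilde X\setminus\operatorname{Supp}E$, which is where the conclusion of the lemma is asserted.

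Next I would identify $\pi^{*}Y$ with $E$ as closed subschemes. By definition of the pullback of a subscheme (item (iv) preceding Lemma \ref{L2.3}), $\pi^{*}Y$ is the closed subscheme of $\tilde X$ whose ideal sheaf is $\pi^{-1}\mathcal{I}_{Y}\cdot\mathcal{O}_{\tilde X}$. By the very construction of $E$ recalled before Definition \ref{D2.1}, the invertible sheaf $\pi^{-1}\mathcal{I}_{Y}\cdot\mathcal{O}_{\tilde X}$ equals $\mathcal{O}_{\tilde X}(-E)$, which is precisely the ideal sheaf of $E$ viewed as a closed subscheme. Hence $\pi^{*}Y=E$ as closed subschemes of $\tilde X$.

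Finally I would appeal to the uniqueness property (c) of Weil functions: two Weil functions for the same object (Cartier divisor, or closed subscheme via Definition \ref{D2.4}) must agree up to $O(1)$. Applying this to $\pi^{*}Y=E$, whether one computes the Weil function via the metric on $\mathcal O_{\tilde X}(E)$ as in Definition \ref{D2.2} or as a minimum using Definition \ref{D2.4} with the single divisor $E$, yields $\lambda_{\pi^{*}Y}(P)=\lambda_{E}(P)+O(1)$. Concatenating the two equalities produces the desired conclusion $\lambda_{Y}(\pi(P))=\lambda_{E}(P)+O(1)$ on $\tilde X\setminus\operatorname{Supp}E$.

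There is no substantive obstacle: the only thing to keep in mind is that the Weil function of a closed subscheme is defined only up to $O(1)$, so the equality in the statement must be read modulo such a term (the $O_{v}(1)$ in the original adelic formulation of \cite{Si87} becomes $O(1)$ in our complex-analytic setting). The slightly delicate point, namely the compatibility between the two a priori different ways of assigning a Weil function to $E$ (as a Cartier divisor versus as a subscheme $\cap_{i}D_{i}$ with only one term), is handled cleanly by the uniqueness property.
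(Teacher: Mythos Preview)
The paper does not supply its own proof of this lemma; it is simply quoted from Silverman \cite{Si87} and used as a black box. Your argument is correct and is in fact the standard one: functoriality of Weil functions reduces the claim to identifying $\pi^{*}Y$ with $E$ as closed subschemes, which follows directly from the defining property $\pi^{-1}\mathcal{I}_{Y}\cdot\mathcal{O}_{\tilde X}=\mathcal{O}_{\tilde X}(-E)$ of the blow-up, after which uniqueness of Weil functions yields the $O(1)$.
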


\subsection{ Characteristic function and proximity function}

Let $X$ be a complex projective variety  and $f: \mathbb{C}\rightarrow X $ be a holomorphic map. Let $ L \rightarrow X $ be a positive line bundle. Denote by $||\cdot||$ a Hermitian metric in $L$ and by $\omega$ its Chern form. We define the characteristic function of $f$ with respect
to $L$ by
$$T_{f,L}(r) = \int_0^{r}\frac{dt}{t}\int_{|z|<t}f^{*}\omega.$$
Since any line bundle can be written as $L = L_{1}\otimes L_{2}^{-1}$ with $L_{1}, L_{2}$ are both positive, we define $T_{f,L}(r) =
T_{f,L_{1}}(r)-T_{f,L_{2}}(r).$

The characteristic function satisfies the following properties:

(i) Functoriality: If $\phi :X \rightarrow X'$ is a morphism and if $L$ is a line bundle on $X',$ then
$$T_{f,\phi^{*}L} = T_{\phi \circ f,L}(r) +O(1).$$

(ii) Additivity: If $L_{1}$ and $L_{2}$ are line bundle on $X,$ then
$$T_{f,L_{1}\otimes L_{2}(r)}= T_{f,L_{1}}(r) +T_{f,L_{2}}(r)+O(1).$$

(iii) Positivity: If $L$ is positive and $f: \mathbb{C}\rightarrow X $ is non-constant, then
$$T_{f,L}(r) \rightarrow + \infty ~~as~~ r\rightarrow +\infty.$$

For an effective divisor $D$ on $X,$ we define a proximity function of $r,$ for any holomorphic map $f : \mathbb{C}\rightarrow X$ with
$f(\mathbb{C})\not \subset Supp D,$
$$m_{f}(r,D) := \int_0^{2\pi}\lambda_{D}(f(re^{i\theta}))\frac{d\theta}{2\pi}.$$
Analogously, for a closed subscheme $Y$ on $X,$ we define a proximity function of $r,$ for a holomorphic curve $f: \mathbb{C}\rightarrow X $ with $f(\mathbb{C})
\not \subset Supp Y,$
$$m_{f}(r,Y) := \int_0^{2\pi}\lambda_{Y}(f(re^{i\theta}))\frac{d\theta}{2\pi}.$$

\section{Some lemmas for divisors and the proof of Theorem \ref{T1.6} }

The following is a reformulation of \cite[Lemma 3.1]{Qu20} by taking the logarithm of both sides of the inequality.\par

\begin{lemma}\label{L3.3}\cite[Lemma 3.1]{Qu20}
Let $t_{0},t_{1},\ldots,t_{n}$ be $n+1$ integers such that $1 = t_{0} < t_{1} < \ldots <t_{n},$ and let $\Delta=\max_{1\leq s \leq n}\frac{t_{s}-t_{0}}{s}.$
 Then for arbitrary real numbers $a_{0}, a_{1}, \ldots, a_{n-1}$ with $a_{0} \geq a_{1} \geq \ldots \geq a_{n-1} \geq 1,$ we have
$$\sum^{n-1}_{i=0}(t_{i+1}-t_{i})\log a_{i} \leq \Delta \sum^{n-1}_{i=0}\log a_{i}.$$
\end{lemma}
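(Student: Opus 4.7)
The plan is to treat this as a pure summation-by-parts identity for a monotone sequence. Since $a_i \geq 1$, setting $b_i := \log a_i$ gives nonnegative reals with $b_0 \geq b_1 \geq \cdots \geq b_{n-1} \geq 0$, and the claim reduces to establishing
$$\sum_{i=0}^{n-1}(t_{i+1}-t_i)\,b_i \;\leq\; \Delta \sum_{i=0}^{n-1} b_i.$$
The only structural facts I will use are the decreasing nature of $(b_i)$ and the bound on partial sums of the differences $t_{i+1}-t_i$ coming from the definition of $\Delta$.

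First I would set $u_i := t_{i+1}-t_i$ and observe that the partial sums telescope: $U_k := \sum_{j=0}^{k} u_j = t_{k+1} - t_0 = t_{k+1}-1$. The defining inequality $\Delta \geq (t_s - t_0)/s$ for $1 \leq s \leq n$ therefore translates into the uniform bound $U_{s-1} \leq \Delta\, s$.

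Next I would apply Abel summation to exploit the monotonicity of $(b_i)$:
$$\sum_{i=0}^{n-1} u_i\, b_i \;=\; b_{n-1}\,U_{n-1} \;+\; \sum_{i=0}^{n-2}(b_i - b_{i+1})\,U_i.$$
All the weights $b_{n-1}$ and $b_i - b_{i+1}$ are nonnegative, so I can substitute the bounds $U_i \leq \Delta(i+1)$ and $U_{n-1} \leq \Delta\, n$ to obtain
$$\sum_{i=0}^{n-1} u_i\, b_i \;\leq\; \Delta \Bigl[\, n\, b_{n-1} + \sum_{i=0}^{n-2}(b_i - b_{i+1})(i+1) \Bigr].$$
A routine re-indexing of the bracket (a second Abel rearrangement, or equivalently a telescoping) yields $\sum_{i=0}^{n-2}(b_i-b_{i+1})(i+1) = \sum_{i=0}^{n-2} b_i - (n-1) b_{n-1}$; adding $n\, b_{n-1}$ collapses this to $\sum_{i=0}^{n-1} b_i$, which closes the argument.

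There is no serious obstacle in this proof; the computation is essentially forced once the right notation is chosen. The two places requiring care are the index bookkeeping in the Abel summation and in the final telescoping step. The key structural input is the nonnegativity $b_i \geq 0$, which is precisely what the hypothesis $a_{n-1} \geq 1$ provides: without it, replacing each $U_i$ by its upper bound $\Delta(i+1)$ would not preserve the direction of the inequality.
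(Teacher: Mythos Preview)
Your proof is correct: the Abel summation rearrangement and the subsequent telescoping are both valid, and you correctly identify that the nonnegativity of $b_{n-1}$ (from $a_{n-1}\ge 1$) is exactly what allows the replacement $U_i\le \Delta(i+1)$ to preserve the inequality. The paper itself does not supply a proof of this lemma---it simply cites it as a logarithmic reformulation of \cite[Lemma~3.1]{Qu20}---so there is no in-paper argument to compare against; your self-contained Abel-summation argument is a clean way to justify the statement directly.
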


Next we need to introduce the definition of $(t_{1}, t_{2}, \ldots,t_{n})$-subgeneral position for Cartier divisors, which was originally given by Quang\cite{Qu20} for hypersurfaces.\par

\begin{definition}\label{D3.2}
Let $X$ be a projective variety of dimension $n$ defined over a number field $k$ and let $D_{1},\ldots,D_{q}$ be $q$ Cartier divisors of $X.$ We say that $D_{1},\ldots,D_{q}$
 are in $(t_{1},t_{2},\ldots,t_{n})$-subgeneral position with respect to $X$ if for every $1\leq s \leq n$ and $t_{s}+1$ Cartier divisors $D_{j_{0}},\ldots,D_{j_{t_{s}}},$
 we have

$$\dim\cap^{t_{s}}_{i=0}Supp D_{j_{i}}\cap X \leq n-s-1,$$
where $t_{0},t_{1},\ldots,t_{n}$ are integers with $0=t_{0}<t_{1}<\ldots<t_{n}.$

\end{definition}

\begin{remark}\label{R3.3}

(i) If ${D_{1},\ldots,D_{q}}$ are in $(t_{1},\ldots,t_{n})$-subgeneral position with respect to $X$, then their distributive constant in $X$
satisfies
$$\Delta=\max_{1\leq k\leq n}\frac{t_{k}}{n-(n-k)} = \max_{1\leq k\leq n}\frac{t_{k}}{k}.$$

(ii) If $D_{1},\ldots,D_{q}$ are in $\ell$-subgeneral position with index $\kappa$ with respect to $X,$ then one has
$$\dim\left(\cap^{k}_{j=1}D_{i_{j}}\right)\leq n-\kappa-(k-(\ell-n+\kappa-1))=\ell-k-1.$$
Hence, ${D_{1},\ldots,D_{q}}$ are in $(1, 2,\ldots, \kappa-1, \ell-n+\kappa, \ell-n+\kappa+1,\ldots, \ell-1, \ell)$-subgeneral position with respect to $X$ and
thus
\begin{eqnarray*}
 \Delta&\leq&\max\left\{\frac{1}{n-(n-1)},\frac{2}{n-(n-2)},\ldots,\frac{\kappa-1}{n-(n-\kappa+1)}, \ldots,\right.\,\,\\&&\left. \frac{(\ell-n)+\kappa}{n-(n-\kappa)},
\frac{(\ell-n)+\kappa+1}{n-(n-\kappa+1)},\ldots,\frac{\ell}{n}\right\}\\& =& \max\left\{1,\frac{\ell-n}{\kappa}+1\right\}\\&=&\frac{\ell-n+\kappa}{\kappa}.
\end{eqnarray*}
\end{remark}

The following lemma is just a special case of \cite[Lemma 3.2]{Qu20}. \par

\begin{lemma}\label{L3.4}\cite[Lemma 3.2]{Qu20}
Let $k$ be a number field. Let $X \subset \mathbb{P}^{M}_{k}$ be a projective variety of dimension $n.$ Let $H_{0},\ldots,H_{l}$ be hyperplanes in $X$
 which are in $\{t_{1},t_{2},\ldots,t_{n}\}$-subgeneral position on $X$ where $t_{0},t_{1},\ldots,t_{n}$ are integers with $0=t_{0}<t_{1}<\ldots<t_{n}=l.$
Let $L_{1},\ldots,L_{q}$ be the normalized linear forms defining $H_{1},\ldots,H_{q},$ respectively. Then there exist linear forms $L'_{1},\ldots,L'_{n+1}$ of $M+1$ variables such that,\par
(i) $L'_{0}=L_{0};$\par
(ii) For every $s \in \{1,\ldots, n\},$ $L'_{s} \in span_{k}(L_{0},\ldots,L_{t_{s}});$\par
(iii) Let $H'_{j},~j=0,\ldots,n,$ be the hyperplanes defined by $L'_{j},~j=0,\ldots,n.$ Then they are in general position on $X.$\par
\end{lemma}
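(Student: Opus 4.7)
The plan is to construct $L'_0, L'_1, \ldots, L'_n$ by induction on $s$, maintaining at stage $s$ the invariant $(*_s)$: for every subset $I \subset \{0, 1, \ldots, s\}$ of size $\sharp I = r + 1$, the intersection $\bigcap_{i \in I} H'_i \cap X$ has dimension at most $n - r - 1$. Setting $L'_0 := L_0$ handles property (i) and the base case $(*_0)$ (under the standing assumption that no $H_j$ contains $X$); imposing $L'_s \in \operatorname{span}_k(L_0, \ldots, L_{t_s})$ at each step gives property (ii); and the final invariant $(*_n)$ is exactly property (iii).

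For the inductive step, assume $L'_0, \ldots, L'_{s-1}$ have been chosen so that $(*_{s-1})$ holds. For each $I' \subset \{0, \ldots, s-1\}$, set $V_{I'} := \bigcap_{i \in I'} H'_i \cap X$, and let $\mathcal{C}$ be the finite collection of irreducible components $V$ of $V_{I'}$ with $\dim V = n - \sharp I'$ (as $I'$ ranges over all subsets). I will choose $L'_s \in \operatorname{span}_k(L_0, \ldots, L_{t_s})$ whose zero hyperplane $H'_s$ contains no $V \in \mathcal{C}$; this suffices because every subset $I \subset \{0, \ldots, s\}$ containing $s$ is of the form $I' \cup \{s\}$ for some $I' \subset \{0, \ldots, s-1\}$, and $\dim(V_{I'} \cap H'_s) \leq n - \sharp I' - 1$ follows either automatically (when $\dim V_{I'} < n - \sharp I'$) or from the fact that $H'_s$ strictly cuts each top-dimensional component of $V_{I'}$.

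The separation step is the heart of the matter. For each $V \in \mathcal{C}$ arising from some $I'$ with $\sharp I' = r$, the subspace $W_V := \{L \in \operatorname{span}_k(L_0, \ldots, L_{t_s}) : L|_V \equiv 0\}$ is proper: if all of $L_0, \ldots, L_{t_s}$ vanished on $V$, then $V$ would be contained in $H_0 \cap \cdots \cap H_{t_s} \cap X$, and in particular in $H_0 \cap \cdots \cap H_{t_{r+1}} \cap X$ (using $r + 1 \leq s$ and the monotonicity $t_{r+1} \leq t_s$); the $(t_1, \ldots, t_n)$-subgeneral position hypothesis applied at index $r + 1$ then forces this intersection to have dimension at most $n - r - 2$, contradicting $\dim V = n - r$. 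Since $k$ is a number field, and hence infinite, the finite union $\bigcup_{V \in \mathcal{C}} W_V$ cannot exhaust $\operatorname{span}_k(L_0, \ldots, L_{t_s})$, so any $L'_s$ in the complement works.

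The main obstacle I anticipate is the simultaneous bookkeeping: at stage $s$ one must preserve $(*_s)$ for every subset of $\{0, \ldots, s\}$ at once, not merely for the chain of initial segments, and hence must separate $L'_s$ from every top-dimensional component of every partial intersection $V_{I'}$ with $I' \subset \{0, \ldots, s-1\}$. The structural reason this remains uniformly feasible is the monotonicity $t_{r+1} \leq t_s$ for $r + 1 \leq s$, which guarantees that the single pool $\{L_0, \ldots, L_{t_s}\}$ is rich enough to detect every relevant $V$, independently of which $I'$ produced it.
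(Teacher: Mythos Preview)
The paper does not supply its own proof of this lemma; it is quoted as a special case of \cite[Lemma~3.2]{Qu20}. Your inductive construction---building $L'_0,\ldots,L'_n$ one at a time while maintaining general position of the hyperplanes already chosen, and at each stage picking $L'_s$ outside a finite union of proper linear subspaces of $\operatorname{span}_k(L_0,\ldots,L_{t_s})$---is the standard argument and is essentially what Quang does.

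There is one small slip in your separation step. You write ``using $r+1\le s$'' to justify $t_{r+1}\le t_s$, but $r=\sharp I'$ can equal $s$ (when $I'=\{0,\ldots,s-1\}$), in which case $r+1\le s$ fails. The detour through $t_{r+1}$ is unnecessary: once you know $V\subset H_0\cap\cdots\cap H_{t_s}\cap X$, apply the $(t_1,\ldots,t_n)$-subgeneral position hypothesis directly at index $s$ to conclude $\dim(H_0\cap\cdots\cap H_{t_s}\cap X)\le n-s-1$. Since $r\le s$, we have $\dim V=n-r\ge n-s>n-s-1$, which is the desired contradiction for every $r\in\{0,\ldots,s\}$ at once. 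With this correction the argument is complete.
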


We need also the following lemma \cite[Theorem 1.22]{Sh13}, which plays an important role in the arguments regarding dimension.\par

\begin{lemma}\label{L3.5}\cite{Sh13, HR21}
Let $X$ be a projective variety over an algebraic closed field $k,$ and $A$ be an ample Cartier divisor on $X.$ Let $F\subset X$ be a proper irreducible subvariety. Then either $F\subset
 A$ or $\dim(F\cap A)\leq \dim F-1.$
\end{lemma}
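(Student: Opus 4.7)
The plan is to reduce the claim to Krull's Hauptidealsatz, exploiting the fact that, as a Cartier divisor, $A$ is cut out locally by a single equation. Ampleness plays no role in the upper bound itself; it would only enter if one wanted nonemptiness of $F \cap A$ (and hence equality).

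First I would interpret the dichotomy: ``$F \subset A$'' should be read as $F \subset \mathrm{Supp}(A)$, in which case the conclusion is trivial. So I may assume $F \not\subset \mathrm{Supp}(A)$ and aim to show $\dim(F \cap A) \leq \dim F - 1$, where $F \cap A$ is understood set-theoretically (the scheme structure is irrelevant for a dimension bound).

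The main step is local. I would choose a finite affine open cover $\{U_\alpha\}$ of $X$ such that on each $U_\alpha$ the divisor $A$ is defined by a single regular function $f_\alpha \in \mathcal{O}_X(U_\alpha)$. For those $\alpha$ with $U_\alpha \cap F \neq \emptyset$, the irreducibility of $F$ together with $F \not\subset \mathrm{Supp}(A)$ forces $f_\alpha|_{F \cap U_\alpha}$ to be a nonzero element of the coordinate ring of the irreducible affine variety $F \cap U_\alpha$. Krull's principal ideal theorem then says that every irreducible component of $V(f_\alpha|_{F \cap U_\alpha})$ has codimension exactly $1$ in $F \cap U_\alpha$, hence dimension $\dim F - 1$.

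To finish I would globalize: any irreducible component $W$ of $F \cap \mathrm{Supp}(A)$ meets some $U_\alpha$, and $W \cap U_\alpha$ is an open (hence dense) subset of $W$ contained in $V(f_\alpha|_{F \cap U_\alpha})$, so $\dim W = \dim(W \cap U_\alpha) \leq \dim F - 1$. Taking the maximum over components yields the desired bound. The only mild subtlety—really the only thing to watch—is being careful that ``$F \cap A$'' is read as the set-theoretic intersection with the support, so that the argument is unaffected by the multiplicity of $A$ along its components; this is automatic, since Krull's theorem gives the dimension bound regardless of the specific local equation chosen.
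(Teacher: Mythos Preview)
Your argument is correct: reducing to an affine chart where $A$ is cut out by one equation and applying Krull's Hauptidealsatz is exactly the standard proof, and your remark that ampleness is irrelevant for the upper bound (only for nonemptiness/equality) is also accurate.

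As for comparison with the paper: the paper does not prove this lemma at all. It is stated with a citation to \cite{Sh13} (Shafarevich, Theorem~1.22) and \cite{HR21}, and no proof is given in the text. The argument you wrote is essentially the one found in Shafarevich's book, so there is no substantive difference to report.
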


For the proof of Theorem \ref{T1.7}, we need to prove the second main theorem for Cartier divisors with distributive constant. The basic idea is to consider the distributive constant for Cartier divisors by making use of methods of Heier-Levin\cite{HL21} and He-Ru\cite{HR21}. \par

\begin{proof}[Proof of Theorem \ref{T1.6}]
Fix a real number $\epsilon>0,$ choose a rational number $\delta>0$ such that
$$\delta\Delta+\delta\Delta(n+1+\delta)<\epsilon,$$
and for a sufficiently small positive rational number $\delta'$ depending on $\delta,$ $\delta A-\delta'D_{i}$
is $\mathbb{Q}$-ample for all $i=1,\ldots,q.$ By the definition of the Seshadri constant, there exists a rational number
 $\epsilon_{i}>0$ such that
$$\epsilon_{D_{i}}(A)-\delta' \le \epsilon_{i}\le \epsilon_{D_{i}}(A)$$
and that $A-\epsilon_{i}D_{i}$ is $\mathbb{Q}$-nef for all $i=1,\ldots,q.$ Then we have
$$(1+\delta)A-(\epsilon_{i}+\delta')D_{i}=(A-\epsilon_{i}D_{i})+(\delta A-\delta'D_{i})$$
is a $\mathbb{Q}$-ample divisor for all $i.$ Let $N$ be a large enough natural number such that $N(1+\delta)A$
and $N[(1+\delta)A-(\epsilon_{i}+\delta')D_{i}]$ become very ample integral divisors for all $i.$

We claim that if $\{D_{1},\ldots,D_{q}\}$ is in $(t_{1},t_{2},\ldots,t_{n})$-subgeneral position with respect to $X,$ then we can construct divisors $div(s_{i})$
on $X,$ $s_{i}\in H^0 (X,N(1+\delta)A-N(\epsilon_{i}+\delta')D_{i}),$ $i=1,\ldots,q,$ such that,\par

(i) $div(s_{i})\sim N(1+\delta)A,i=1,\ldots,q.$ \par
(ii) The divisors $div(s_{1}), \ldots, div(s_{q})$ are in $\{t_{1},t_{2},\ldots,t_{n}\}$-subgeneral position on $X.$\par

We define $div(s_{1}),\ldots,div(s_{q})$ by induction as follows. Assume that, for some $j\in \{1,\ldots,q\},$ $div(s_{1}),$ $\ldots,$ $div(s_{j-1})$
 with desired property have been defined and $div(s_{1}),\ldots,div(s_{j-1}),D_{j},\ldots,D_{q}$ are
in $\{t_{1},t_{2},\ldots,t_{n}\}$-subgeneral position on $X$ (for $j=1,$ this reduces to the hypothesis that
$D_{1},\ldots,D_{q}$ are in $\{t_{1},t_{2},\ldots,t_{n}\}$-subgeneral position).

By the definition of $\{t_{1},t_{2},\ldots,t_{n}\}$-subgeneral position,

 (a) For all $1\leq s\leq n, I' \subset \{1,\ldots,j-1\}, J' \subset \{j+1,\ldots ,q\},$ and $\sharp I'+\sharp J'=t_{s},$ we have
$$\dim (\cap_{i\in I'}div(s_{i})\cap(\cap_{k\in J'}D_{k})\cap X) \leq n-s,$$
and
$$\dim (D_{j}\cap(\cap_{i\in I'}div(s_{i}))\cap(\cap_{k\in J'}D_{k})\cap X) \leq n-s-1 .$$
We can find a non-zero section $s_{j}\in H^0 (X,N(1+\delta)A-N(\epsilon_{j}+\delta')D_{j})$
such that $s_{j}$ does not vanish entirely on any irreducible components of $\cap_{i\in I'}div(s_{i})\cap(\cap_{k\in J'}D_{k}),$
for all $1\leq s\leq n, I' \subset \{1,\ldots,j-1\}, J \subset \{j+1,\ldots ,q\},$ and $\sharp I'+\sharp J'=t_{s}.$ Then by Lemma \ref{L3.5}
\begin{eqnarray*}
&&\dim \{div(s_{j})\cap(\cap_{i\in I'}div(s_{i}))\cap(\cap_{k\in J'}D_{k})\cap X\} \\\nonumber&\leq& \max\left(\dim D_{j}\cap(\cap_{i\in I'}div(s_{i}))\cap
(\cap_{k\in J'}D_{k})\cap X,n-s-1 \right)\\\nonumber& = & n-s-1.
\end{eqnarray*}

  (b) For all $1 \leq s\leq n, I \subset \{1,\ldots,j-1\}, J\subset\{j+1,\ldots,q\},$ and $\sharp I +\sharp J = t_{s}+1,$
$$\dim (\cap_{i\in I}div(s_{i})\cap(\cap_{k\in J}D_{k})\cap X) \leq n-s-1.$$
From this, we get that $div(s_{1}),\ldots, div(s_{j-1}), div(s_{j}), D_{j+1}, \ldots, D_{q}$ are in $\{t_{1}, \ldots, t_{n}\}$-subgeneral position on $X.$
Thus, the divisors $div(s_{1}), \ldots, div(s_{q})$ satisfy the above required properties (i) and (ii). Hence, the claim is proved.\par

Denote $F_{i}=div(s_{i})$ for $i=1,\ldots, q.$ It's easy to see
that the distributive constant of $\{F_{1},\ldots,F_{q}\}$ is equal to the distributive constant of $\{D_{1},\ldots,D_{q}\}.$
Now we're going to prove the theorem.

It is sufficient for us to consider the case where $\Delta < \frac{q}{n+1}.$ Note that $\Delta \geq 1,$ and hence $q > n+1.$
If there exits $i \in \{1,\ldots,q\}$ such that $\cap^{q}_{j=1,j\ne i}F^{*}_{j}\cap X \ne \emptyset,$ then

$$\Delta \geq \frac{q-1}{n} \geq \frac{q}{n+1}.$$
This is a contradiction. Therefore, $\cap^{q}_{j=1,j\ne i}F^{*}_{j}\cap X = \emptyset$ for all $i \in \{1,\ldots,q\}.$

We denote by $\mathcal{I}$ the set of all permutations of the set $\{1,\ldots,q\}.$ Denote by $n_{0}$ the cardinality of $\mathcal{I},$ $n_{0} = q!$
and we write $\mathcal{I} = \{I_{1},\ldots,I_{n_{0}}\},$ where $I_{i}=\{I_{i}(0),\ldots,I_{i}(q-1)\} \in \mathbb{N}^q$ and $I_{1} <I_{2} <\ldots
< I_{n_{0}}$ in the lexicographic order.

For each $I_{i} \in \mathcal{I},$ since $\cap^{q-1}_{j=1}F_{I_{i}(j)} \cap X =\emptyset,$ there exists $n+1$ integers $t_{i,0},t_{i,1},\ldots,t_{i,n}$
with $0=t_{i,0} < \ldots < t_{i,n} =l_{i},$ where $l_{i} \leq q-2$ such that $\cap^{l_{i}}_{j=0}F_{I_{i}(j)} \cap X =\emptyset$ and

$$\dim (\cap^{s}_{j=0}F_{I_{i}(j)})\cap X = n-u~\forall ~t_{i,u-1}\leq s <t_{i,u},~1\leq u \leq n.$$
Then, $\Delta >\frac{t_{i,u}-t_{i,0}}{u}$ for all $1\leq u \leq n.$

It means that $\{F_{1},\ldots,F_{q}\}$ is in $(t_{i,1},t_{i,2},\ldots,t_{i,n})$-subgeneral position with respect to $X.$
Denote by $\phi: X \to \mathbb P^{\tilde{N}}(k)$ the canonical embedding associated to the very
ample divisor $N(1+\delta)A$ and let $H_{0},\ldots,H_{q-1}$ be the hyperplanes in $\mathbb P^{\tilde{N}}(k)$ with $F_{j}=\phi^*H_{j-1}$ for
$j=1,\ldots,q.$ We denote $L_{0},\ldots,L_{q-1}$ to be the linear forms defining $H_{0},\ldots,H_{q-1}$
respectively. By Lemma \ref{L3.4}, there exist hyperplanes $\hat H_{0},\ldots,\hat H_{n}$ with defining linear forms $\hat L_{0},\ldots,\hat L_{n},$
such that $\hat L_{0}=L_{0},$ and for every $s \in \{1,\ldots, n\},$ $\hat L_{s} \in span_{k}(L_{0},\ldots,L_{t_{s}}) $
and $\phi^*\hat H_{0},\ldots,\phi^*\hat H_{n}$ are located in general position on $X.$ Applying Theorem \ref{T1.1} to $\phi^*\hat H_{0},\ldots,\phi^*\hat H_{n},$
we conclude that there exists a Zariski-closed set $Z$ such that for all $x \in X(k) \backslash Z,$
 \begin{eqnarray*}
 \int_{0}^{2\pi}\sum^{n}_{i=0}\lambda_{\phi^*\hat H_{i}}(x)\frac{d\theta}{2\pi} \leq [(n+1)+\delta]T_{f,N(1+\delta)A}(r).
 \end{eqnarray*}

Consider a point $P=\phi (x) \in \mathbb{P}^{\tilde {N}}(k),$ fix a element $I_{i}\in \mathcal{I},$ we arrange such that
$$||L_{I_{i}(0)}(P)||\leq ||L_{I_{i}(1)}(P)||\leq \ldots \leq ||L_{I_{i}(q-1)}||,$$
which implies
 \begin{eqnarray}\label{E4.1}
\sum^{I_{i}(q-1)}_{j=I_{i}(0)}\lambda_{H_{j}(P)}\leq \sum^{t_{i,n}}_{j=t_{i.0}}\lambda_{H_{j}(P)}+O(1).
\end{eqnarray}
(The proof of (\ref{E4.1}) is similar to that of Lemma 20.7 in \cite{VP09} which is standard and is omitted here.)
Thus using the construction of $\hat L_{0},\ldots,\hat L_{n},$ we have
$$||\hat L_{u}(P)||\leq B\max_{0\leq j\leq t_{i,u} }||L_{j}(P)||=B||L_{t_{i,u}}(P)||$$
for all $u=0,\ldots,n$ and some constant $B>0.$ Thus, by the definition of Weil function,
$$\lambda_{\hat H_{u}}(P)\geq \lambda_{H_{t_{i,u}}}(P)+ O(1).$$
Therefore, by Lemma \ref{L3.3} and (\ref{E4.1}) we have
\begin{eqnarray}\label{E4.2}&&\sum^{q-1}_{i=0}\lambda_{H_{j}}(P)\\\nonumber&=& \sum^{I_{i}(q-1)}_{j=t_{i,0}}\lambda_{H_{j}}(P)\\\nonumber&\leq& \sum^{t_{i,1}}_{j=t_{i,0}}\lambda_{H_{j}}(P)+\sum^{t_{i,2}}_{j=t_{i,1}+1}
\lambda_{H_{j}}(P)+\cdots+\sum^{t_{i,n}}_{j=t_{i,n-1}+1}\lambda_{H_{j}}(P)+O(1)\\\nonumber&\leq&(t_{i,1}-t_{i,0})\lambda_{H_{t_{i,0}}}(P)+(t_{i,2}-t_{i,1})
\lambda_{H_{t_{i,1}}}(P)+\cdots\\\nonumber&&+(t_{i,n}-t_{i,n-1})\lambda_{H_{t_{i,n-1}}}(P)+\lambda_{H_{t_{i,n}}}(P)+O(1)\\\nonumber&\leq&\Delta
\sum^{n-1}_{j=0}\lambda_{H_{t_{i,j}}}(P)+\lambda_{H_{t_{i,n}}}(P)+O(1)\\\nonumber&\leq&\Delta\sum^{n}_{j=0}\lambda_{H_{t_{i,j}}}(P)+O(1)\\\nonumber
&\leq&\Delta\sum^{n}_{u=0}\lambda_{\hat H_{u}}(P)+O(1).
\end{eqnarray}

By the functoriality of Weil function, we have
$\lambda_{H_{j}}(P)=\lambda_{H_{j}}(\phi(x))=\lambda_{\phi^{*}H_{j}}(x)=\lambda_{F_{j+1}}(x),$ thus,
\begin{eqnarray*}
\int_{0}^{2\pi}\sum^{q}_{i=1}\lambda_{F_{i}}(x)\frac{d\theta}{2\pi}\nonumber&=&\int_{0}^{2\pi}\sum^{q-1}_{j=0}\lambda_{H_{j}}(P)\frac{d\theta}{2\pi}
\\\nonumber&\leq& \Delta\int_{0}^{2\pi}\sum^{n}_{i=0}\lambda_{\hat H_{i}}(P)\frac{d\theta}{2\pi}+O(1)\\\nonumber&\leq& \Delta[(n+1)+\delta]T_{f,N(1+
\delta)A}(r)+O(1).
\end{eqnarray*}

From the construction of divisors $F_{i}=div(s_{i})$ for $i=1,\ldots,q,$ we know that $F_{i}-N(\epsilon_{i}+\delta')D_{i}$ is effective for each $i=1,\ldots,q.$
By the definition of the Seshadri constant and the Boundedness from below of a Weil function,
$$\lambda_{F_{i}}(x)\geq N(\epsilon_{i}+\delta')\lambda_{D_{i}}+O(1)\geq N\epsilon_{D_{i}}(A)\lambda_{D_{i}}(x)+O(1).$$
Then,
\begin{eqnarray*}
N\int_{0}^{2\pi}\sum^{q}_{i=1}\epsilon_{D_{i}}(A)\lambda_{D_{i}}(x)\frac{d\theta}{2\pi}&\leq& \int_{0}^{2\pi}\sum^{q}_{i=1}\lambda_{F_{i}}(x)
\frac{d\theta}{2\pi}\\&\leq&\Delta[(n+1)+\delta]T_{f,N(1+\delta)A}(r)+O(1).
\end{eqnarray*}
Note that $T_{f,N(1+\delta)A}(r)=N(1+\delta)T_{f,A}(r),$ hence we get the inequality as follow,
$$\int_{0}^{2\pi}\sum^{q}_{i=1}\epsilon_{D_{i}}(A)\lambda_{D_{i}}(x)\frac{d\theta}{2\pi}\leq\Delta[(n+1)+\delta](1+\delta)T_{f,A}(r).$$
It implies that
$$\sum^{q}_{i=1}\epsilon_{D_{i}}(A)m_{f}(r,D_{i})\leq \Delta[(n+1)+\delta](1+\delta)T_{f,A}(r).$$
Recall the choice of $\delta$, we get for every $\epsilon >0,$
\begin{eqnarray*}
\sum^{q}_{j=1}\epsilon_{D_{j}}(A)m_{f}(r,D_{j})\leq_{exc} (\Delta(n+1)+\epsilon))T_{f,A}(r).
\end{eqnarray*}
\end{proof}

\section{Proof of theorem \ref{T1.7}}

Before the proof of Theorem \ref{T1.7}, we need a key lemma.

\begin{lemma}\label{L5.1}\cite[Lemma 5.4.24]{La17}
Let $X$ be a projective variety, $\mathcal{I}$ be a coherent ideal sheaf. Let $\pi:\tilde{X}\rightarrow X$ be the blowing-up of $\mathcal I$ with exceptional
divisor $E.$ Then there exists an integer $p_{0}=p_{0}(\mathcal I)$ with the property that if $p\geq p_{0},$ then $\pi_{*}\mathcal{O}_{\tilde {X}}(-pE)=
\mathcal{I}^{p},$ and moreover, for any divisor $D$ on $X,$
$$H^{i}(X,\mathcal{I}^{p}(D))=H^{i}(\tilde{X},\mathcal{O}_{\tilde{X}}(\pi^{*}D-pE))$$
for all $i\geq 0.$
\end{lemma}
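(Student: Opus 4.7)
The plan is to realize $\tilde{X}$ as the relative $\mathrm{Proj}$ of the Rees algebra of $\mathcal{I}$ and then invoke relative Serre vanishing for the tautological line bundle; the cohomology identification will then fall out of the Leray spectral sequence together with the projection formula.

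First, I would write
\begin{equation*}
\tilde X \;=\; \mathrm{Proj}_X\!\left(\bigoplus_{p\geq 0}\mathcal{I}^p\right),\qquad \mathcal{I}^0=\mathcal O_X,
\end{equation*}
and identify the tautological invertible sheaf $\mathcal O_{\tilde X}(1)$ with $\mathcal O_{\tilde X}(-E)$; this is the defining property of the exceptional divisor on the blow-up along $\mathcal I$. The morphism $\pi$ is then projective and $\mathcal O_{\tilde X}(-E)$ is $\pi$-ample. There is a canonical graded $\mathcal O_X$-algebra homomorphism from $\bigoplus_p \mathcal I^p$ to $\bigoplus_p \pi_*\mathcal O_{\tilde X}(-pE)$. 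By the relative form of Serre's theorem (EGA~III, or Hartshorne~III.8.8 applied relatively) for the projective morphism $\pi$ with the $\pi$-ample line bundle $\mathcal O_{\tilde X}(-E)$, this map is an isomorphism in high enough degree and the higher direct images vanish: there exists an integer $p_0=p_0(\mathcal I)$ such that, for all $p\geq p_0$,
\begin{equation*}
\pi_*\mathcal O_{\tilde X}(-pE)=\mathcal I^p \qquad \text{and}\qquad R^i\pi_*\mathcal O_{\tilde X}(-pE)=0 \;\;\text{for every}\;\; i\geq 1.
\end{equation*}
This yields the first assertion of the lemma.

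For the cohomology identification, I would tensor with $\mathcal O_X(D)$ outside of $\pi_*$ using the projection formula. Since $\pi^*\mathcal O_X(D)$ is locally free on $\tilde X$, for every $i\geq 0$ one has
\begin{equation*}
R^i\pi_*\bigl(\mathcal O_{\tilde X}(\pi^*D-pE)\bigr)\;=\;\mathcal O_X(D)\otimes R^i\pi_*\mathcal O_{\tilde X}(-pE),
\end{equation*}
which for $p\geq p_0$ equals $\mathcal I^p(D)$ when $i=0$ and is zero when $i\geq 1$. Because the higher direct images vanish, the Leray spectral sequence
\begin{equation*}
E_2^{i,j}=H^i\!\left(X,R^j\pi_*\mathcal O_{\tilde X}(\pi^*D-pE)\right)\;\Longrightarrow\; H^{i+j}\!\left(\tilde X,\mathcal O_{\tilde X}(\pi^*D-pE)\right)
\end{equation*}
degenerates at $E_2$, giving the desired isomorphism $H^i(X,\mathcal I^p(D))=H^i(\tilde X,\mathcal O_{\tilde X}(\pi^*D-pE))$ for all $i\geq 0$ and all $p\geq p_0$.

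The main subtlety is ensuring that the threshold $p_0$ is independent of the divisor $D$. This is precisely the point of isolating relative Serre vanishing and the "eventual isomorphism with the Rees algebra" at the level of the structure sheaf of $\tilde X$: after $p_0$ has been extracted from the pair $(\pi,\mathcal O_{\tilde X}(-E))$ alone, the dependence on $D$ is moved entirely outside of $\pi_*$ by the projection formula, so the conclusion holds verbatim for any Cartier divisor $D$, effective or not. A minor care-point is that $\mathcal I^p(D)=\mathcal I^p\otimes\mathcal O_X(D)$ is only a coherent sheaf, not an ideal, but this is harmless for the cohomological statement.
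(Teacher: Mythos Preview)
The paper does not supply its own proof of this lemma; it simply quotes it from Lazarsfeld's book \cite[Lemma~5.4.24]{La17}. Your argument is the standard one and is essentially what Lazarsfeld records there: realize $\tilde X$ as $\mathrm{Proj}$ of the Rees algebra, use that $\mathcal O_{\tilde X}(-E)$ is $\pi$-ample to invoke relative Serre vanishing for $R^i\pi_*\mathcal O_{\tilde X}(-pE)$, then combine the projection formula with the degenerating Leray spectral sequence. Your observation that $p_0$ depends only on $(\pi,\mathcal O_{\tilde X}(-E))$ and not on $D$, because the twist by $\mathcal O_X(D)$ is pulled outside $\pi_*$ via the projection formula, is exactly the point that makes the statement uniform in $D$. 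There is nothing to add.
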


We now prove the second main theorem for closed subschemes in $\ell$-subgeneral position with index $\kappa.$\par

\begin{proof}[Proof of Theorem \ref{T1.7}]
The proof basically follows Heier-Levin\cite{HL21} and He-Ru\cite{HR21}.
Denote by $\mathcal{I}_{i}$ the ideal sheaf of $Y_{i},$ $\pi_{i}:\tilde{X}\rightarrow X$ the blowing-up of $X$ along $Y_{i},$ and $E_{i}$
the exceptional divisor on $\tilde{X}_{i}.$ Then, fix a real number $\epsilon>0,$ choose a rational number $\delta>0$ such that
$$\frac{\delta(\ell-n+\kappa)}{\kappa}+\frac{\delta(\ell-n+\kappa)}{\kappa}(n+1+\delta)<\epsilon,$$
and for a sufficiently small positive rational number $\delta'$ depending on $\delta,$ $\delta A-\delta'E_{i}$
is $\mathbb{Q}$-ample for all $i=1,\ldots,q.$ By the definition of the Seshadri constant, there exists a rational number
 $\epsilon_{i}>0$ such that
$$\epsilon_{E_{i}}(A)-\delta' \le \epsilon_{i}\le \epsilon_{E_{i}}(A),$$
and that $A-\epsilon_{i}E_{i}$ is $\mathbb{Q}$-nef for all $i=1,\ldots,q.$ Then we have
$$(1+\delta)A-(\epsilon_{i}+\delta')E_{i}=(A-\epsilon_{i}E_{i})+(\delta A-\delta'E_{i})$$
is a $\mathbb{Q}$-ample divisor for all $i.$ Let $N$ be a large enough natural number such that $N(1+\delta)A$
and $N[(1+\delta)A-(\epsilon_{i}+\delta')E_{i}]$ become very ample integral divisors for all $i.$

We claim that if $\{Y_{1},\ldots,Y_{q}\}$ is in $\ell$-subgeneral position with index $\kappa$ with respect to $X,$ then we can construct divisors $F_{i}$
on $X,$ $i=1,\ldots,q,$ such that,\par

(i) $F_{i}\sim N(1+\delta)A,i=1,\ldots,q.$ \par
(ii) $\pi^{*}F_{i}\geq N(\epsilon_{i}+\delta')E_{i},i=1,\ldots,q.$ \par
(iii) The divisors $F_{1},\ldots,F_{q}$ are in $\ell$-subgeneral position with index $\kappa$ on $X.$\par

Like the special divisor case in the proof of preparation theorem, we can construct by induction. Assume that, for some $j\in \{1,\ldots,q\},$ $div(s_{1}),$ $\ldots,$
$div(s_{j-1})$ with desired property have been defined and $F_{1},$ $\ldots,$ $F_{j-1},$ $Y_{j},$ $\ldots,$ $Y_{q}$  $(F_{i}=div(s_{i}))$ are
in $\ell$-subgeneral position with index $\kappa$ on $X$ (for $j=1,$ this reduces to the hypothesis that
$Y_{1},\ldots,Y_{q}$ are in $\ell$-subgeneral position with index $\kappa$).
To find $F_{j}$, we let $\tilde{F}^{(j)}_{i}=\pi^{*}_{j}F_{i},i=1,\ldots,j-1$ and $\tilde{Y}^{j}_{i}=\pi^{*}_{j}Y_{i}$ for $i= j+1,\ldots,q. $
 Since in particular, $F_{1},\ldots,F_{j-1},Y_{j+1},\ldots,Y_{q}$ are in $\ell$-subgeneral position with index $\kappa$ on $X$, and
by noticing that $\pi^{-1}_{j}$ is an isomorphism outside of $Y_{j}$, we know that  $\tilde F_{1},\ldots,\tilde F_{j-1},\tilde Y_{j+1},\ldots,\tilde Y_{q}$ are
in $\ell$-subgeneral position with index $\kappa$ on $\tilde X_{j}$ outside of $E_{j}.$ Thus it reduces to the construction in the divisor
case, and by the argument in the divisor case, there are sections
$$ \tilde s_{j}\in H^0 (\tilde X_{j},\mathcal{O}_{\tilde{X}_{j}}(N(1+\delta)\pi^{*}_{j}A-N(\epsilon_{i}+\delta')E_{j})),$$
such that $\tilde F_{1},\ldots,\tilde F_{j-1},div(\tilde{s}_{j}),\tilde Y_{j+1},\ldots,\tilde Y_{q}$ are in $\ell$-subgeneral
position with index $\kappa$ on $\tilde X_{j}$ outside of $E_{j}$, where we regard $H^{0}(\tilde{X}_{j},\mathcal{O}_{\tilde{X}_{j}}(N((1+\delta)\pi^{*}_{j}A-(\epsilon_{j}+
\delta')E_{j})))$ as a subspace of $H^0 (\tilde X_{j},\mathcal{O}_{\tilde{X}_{j}}(N(1+\delta)\pi^{*}_{j}A)).$

To guarantee that $F_{j}$ has the required properties, by Lemma \ref{L5.1}, we have, for a big enough $N$,
$$H^0 (X,\mathcal{O}_{X}(N(1+\delta)A)\otimes \mathcal{I}^{N(\epsilon_{i}+\delta')}_{j})=H^{0}(\tilde{X}_{j},\mathcal{O}_{\tilde{X}_{j}}(N((1+\delta)\pi^{*}_{j}A-(\epsilon_{j}+
\delta')E_{j}))).$$
Therefore there is an effective divisor $F_{j}\sim N(1+\delta)A$ on $X$ such that $div(\tilde s_{j})=\pi^{*}_{j}F_{j}.$ Since $\tilde s_{j}\in H^0
(\tilde X_{j},\mathcal{O}_{\tilde{X}_{j}}(N(1+\delta)\pi^{*}_{j}A-N(\epsilon_{i}+\delta')E_{j})),$ we have $\pi^{*}F_{j}\geq N(\epsilon_{i}+\delta')
E_{j}$ on $\tilde{X}_{j}.$ $\tilde F_{1},\ldots,\tilde F_{j-1},div(\tilde{s}_{j}),\tilde Y_{j+1},\ldots,\tilde Y_{q}$ are in $\ell$-subgeneral
position with index $\kappa$ on $\tilde X_{j}$ outside of $E_{j},$ and $\pi_{i}$ is an isomorphism above the complement of $Y_{j},$ thus $ F_{1},\ldots, F_{j-1},F_{j},
Y_{j+1},\ldots,Y_{q}$ are in $\ell$-subgeneral position with index $\kappa$ on $X_{j}$ outside of $Y_{j}.$ Since $Y_{j}$ is in $\ell$-subgeneral position with index $\kappa$ with $F_{1},\ldots,F_{j-1},Y_{j+1},\ldots,Y_{q},$ it implies that $F_{1},\ldots,F_{j-1},F_{j},Y_{j+1},\ldots,Y_{q}$ are in $\ell$-subgeneral position with index $\kappa$ on $X.$
Thus, we obtain divisors $F_{1},\ldots,F_{q}$ with the required properties. Hence, the claim is proved.\par

Since $\Delta\leq\frac{\ell-n+\kappa}{\kappa},$ as a special case of the result in the proof of Theorem \ref{T1.6} in the above section, we can also get
\begin{eqnarray}\label{E5.1}\hspace{35pt}
\int_{0}^{2\pi}\sum^{q}_{i=1}\lambda_{F_{i}}(x)\frac{d\theta}{2\pi}\leq \left[\frac{\ell-n+\kappa}{\kappa}(n+1+\delta)\right]T_{f,N(1+\delta)A}(r)+O(1)
\end{eqnarray}
on $X(k)\setminus Z$ where $Z$ is a proper Zariski-closed subset of $X.$ By functoriality, additivity, and the fact that Weil functions of divisors are
bounded from below, for all $P\in \tilde {X}_{i}\setminus Supp E_{i},$
\begin{eqnarray*}
\lambda_{F_{i}}(\pi_{i}(P))&=&\lambda_{\pi^{*}_{i}F_{i}}(P)+O(1)\\\nonumber&\geq & N(\epsilon_{i}+\delta')\lambda_{E_{i}}(\pi_{i}(P))+O(1)\\\nonumber&
=& N(\epsilon_{i}+\delta')\lambda_{Y_{i}}(\pi_{i}(P))+O(1)\\\nonumber&\geq &N(\epsilon_{Y_{i}})\lambda_{Y_{i}}(\pi_{i}(P))+O(1).
\end{eqnarray*}
Together with (\ref{E5.1}),
$$\int_{0}^{2\pi}\sum^{q}_{i=1}N\epsilon_{Y_{i}}\lambda_{Y_{i}}(x)\frac{d\theta}{2\pi}\leq \left[\frac{\ell-n+\kappa}{\kappa}(n+1+\delta)\right]T_{f,N(1+\delta)A}(r)+O(1).$$
Then, by the choice of $\epsilon$, we have
$$\sum^{q}_{j=1}\epsilon_{Y_{j}}(A)m_{f}(r,Y_{j})\leq_{exc} \left(\frac{(\ell-n+\kappa)(n+1)}{\kappa}+\epsilon\right)T_{f,A}(r).$$
\end{proof}

\section{Schmidt's subspace theorem}

In this section, we give the counterpart in Diophantine approximation of our main results. The standard notations in Schmidt's subspace Theorem can be seen in \cite{HL21},\cite{HR21},\cite{VP87},\cite{VP09}).

Let $k$ be a number field. Denote by $M_{k}$ the set of places of $k$ and by $k_{v}$ the completion of $k$ for each $v \in M_{k}.$ Norms $||\cdot||_{v}$ on $k$ are normalized so that
$$||x||_{v}=|\sigma(x)|^{[k_{v}:\mathbb{R}]}\quad or \quad ||p||_{v}=p^{-[k_{v}:\mathbb{Q}_{p}]}$$
if $v\in M_{k}$ is an Archimedean place corresponding to an embedding $\sigma:k \to \mathbb{C}$ or a non-Archimedean place lying above the rational prime $p$, respectively.

An $M_{k}$-constant is a collection $(c_{v})_{v\in M_{k}}$ of real constants such that $c_{v}=0$ for all but finitely many $v$. Hights are logarithmic and relative to the number field used as a base field which is always denoted by $k.$ For $\mathbf{x}=(x_{0},\ldots,x_{n}) \in k^{n+1},$ define
$$||\mathbf{x}||_{v}:= \max\{||x_{0}||_{v},\ldots,||x_{n}||_{v}\},\quad v\in M_{k}.$$
The absolute logarithmic height of a point $\mathbf{x}=[x_{0}:\cdots:x_{n}] \in \mathbb{P}^{n}(k)$ is defined by
$$h(\mathbf{x}):=\sum_{v\in M_{k}}\log ||\mathbf{x}||_{v}.$$

For each $v\in M_{k},$ we can associate the local Weil functions $\lambda_{Y,v}$ which have similar properties as the Weil function introduced in Section 2.

Similar discussions as in Nevanlinna theory, one can easily obtain the counterparts of Theorem \ref{T1.6}, Theorem \ref{T1.7} for Schmidt's subspace theorems in Diophantine approximation. Thus we omit the details.

\begin{theorem}\label{T5.1}
Let $X$ be a projective variety of dimension $n$ defined over a number field $k.$ Let $S$ be a finite set of places of $k.$ For each $v\in S,$ let $D_{1,v}
,\ldots,D_{q,v}$ be Cartier divisors of $X$, defined over $k$, and with the distributive constant $\Delta$. Let $A$ be an ample Cartier divisor on $X$. Then, for $\epsilon > 0,$ there exists a Zariski-closed set $Z \subset X$ such that for all points $x \in X(k)\setminus Z$,
\begin{eqnarray*}
\sum_{v\in S}\sum^{q}_{j=1}\epsilon_{D_{j,v}}(A)\lambda_{D_{j,v},v} < (\Delta(n+1)+\epsilon))h_{A}(x).
\end{eqnarray*}
\end{theorem}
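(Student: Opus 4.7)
The plan is to translate the proof of Theorem \ref{T1.6} into the Diophantine setting via the Vojta dictionary, carried out place-by-place for $v\in S$, with the classical Schmidt subspace theorem for hyperplanes in general position replacing the appeal to Theorem \ref{T1.1}. First I would fix $\epsilon>0$ and, exactly as in the proof of Theorem \ref{T1.6}, choose small positive rationals $\delta,\delta'$ together with rationals $\epsilon_{i,v}$ with $\epsilon_{D_{i,v}}(A)-\delta'\le\epsilon_{i,v}\le\epsilon_{D_{i,v}}(A)$, so that $(1+\delta)A-(\epsilon_{i,v}+\delta')D_{i,v}$ is $\mathbb{Q}$-ample, and then pick $N$ large enough that $N(1+\delta)A$ and each $N[(1+\delta)A-(\epsilon_{i,v}+\delta')D_{i,v}]$ are very ample.

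For each $v\in S$ I would next run the inductive Bertini-type construction of Theorem \ref{T1.6} (which is purely algebro-geometric and works verbatim for each $v$) to produce effective divisors $F_{1,v},\ldots,F_{q,v}$ with $F_{i,v}\sim N(1+\delta)A$, $F_{i,v}-N(\epsilon_{i,v}+\delta')D_{i,v}\ge 0$, and whose distributive constant in $X$ equals that of $\{D_{1,v},\ldots,D_{q,v}\}$ and hence is at most $\Delta$. The additivity and boundedness-from-below of local Weil functions then give
$$\lambda_{F_{i,v},v}(x)\ge N\,\epsilon_{D_{i,v}}(A)\,\lambda_{D_{i,v},v}(x)+O_v(1).$$

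The heart of the argument is the embedding and permutation step. Let $\phi\colon X\hookrightarrow\mathbb{P}^{\tilde N}(k)$ be the embedding associated with the very ample $N(1+\delta)A$, write $F_{i,v}=\phi^{*}H_{i-1,v}$ with defining linear forms $L_{j,v}$, and for each $x\in X(k)$ and each $v\in S$ order them so that $\|L_{I_v(0),v}(\phi(x))\|_v\le\cdots\le\|L_{I_v(q-1),v}(\phi(x))\|_v$. Since there are only finitely many orderings and $|S|<\infty$, one may work on each stratum and take a finite union of exceptional sets at the end. On a given stratum, Lemma \ref{L3.4} supplies auxiliary linear forms $\hat L_{0,v},\ldots,\hat L_{n,v}$ whose zero sets $\phi^{*}\hat H_{u,v}$ are in general position on $X$ and satisfy $\lambda_{\hat H_{u,v},v}(\phi(x))\ge\lambda_{H_{t_{v,u},v},v}(\phi(x))+O_v(1)$, and applying the classical Schmidt subspace theorem simultaneously across $v\in S$ yields a proper Zariski-closed $Z\subset X$ with
$$\sum_{v\in S}\sum_{u=0}^{n}\lambda_{\hat H_{u,v},v}(\phi(x))<(n+1+\delta)\,h_{N(1+\delta)A}(x)$$
for all $x\in X(k)\setminus Z$.

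To finish, I would combine the ordering, the inequality on $\hat H_{u,v}$, and Lemma \ref{L3.3} precisely as in the derivation of (\ref{E4.2}) to pull out the factor $\Delta$, giving $\sum_{v,i}\lambda_{F_{i,v},v}(x)\le \Delta(n+1+\delta)\,h_{N(1+\delta)A}(x)+O(1)$, then divide by $N$ and substitute the lower bound on $\lambda_{F_{i,v},v}$ from the Seshadri-constant comparison, using $h_{N(1+\delta)A}=N(1+\delta)h_A$ and the choice of $\delta$ to conclude. I expect the main obstacle to be purely bookkeeping rather than conceptual: managing the permutation-dependent hyperplanes $\hat L_{*,v}$ for all $v\in S$ at once, and verifying that the union, over all finitely many orderings and all $v\in S$, of the exceptional loci from Lemma \ref{L3.4} and Schmidt's theorem is still a proper Zariski-closed subset of $X$. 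Both are formal once one commits to the finite stratification.
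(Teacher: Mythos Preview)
Your proposal is correct and is precisely the translation the paper has in mind: the authors do not write out a proof of Theorem \ref{T5.1} at all, stating only that ``similar discussions as in Nevanlinna theory'' yield the result via the Vojta dictionary, so your plan of rerunning the proof of Theorem \ref{T1.6} place-by-place with Schmidt's subspace theorem in place of Theorem \ref{T1.1} is exactly the intended argument. The bookkeeping concern you raise (finitely many orderings, finitely many places, hence a finite union of proper Zariski-closed exceptional sets) is indeed purely formal.
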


Theorem \ref{T5.1} generalizes Theorem 2.6 in \cite{HR21}.

\begin{theorem}\label{T5.2}
Let $X$ be a projective variety of dimension $n$ defined over a number field $k.$ Let $S$ be a finite set of places of $k.$ For each $v\in S,$ let $Y_{1,v}
,\ldots,Y_{q,v}$ be closed subschemes of $X$, defined over $k$, and in $\ell$-subgeneral position with index $\kappa$ in $X$, and $\ell \geq n.$ Let $A$ be
an ample Cartier divisor on $X$. Then, for $\epsilon > 0,$ there exists a Zariski-closed set $Z \subset X$ such that for all points $x \in X(k)\setminus Z$,
\begin{eqnarray*}
\sum_{v\in S}\sum^{q}_{j=1}\epsilon_{Y_{j,v}}(A)\lambda_{Y_{j,v},v} < \left(\frac{(\ell-n+\kappa)(n+1)}{\kappa}+\epsilon\right)h_{A}(x).
\end{eqnarray*}
\end{theorem}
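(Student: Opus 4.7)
The plan is to mirror, in the number-field setting, the strategy used for Theorem \ref{T1.7}, replacing the proximity function $m_f(r,\cdot)$ by the sum $\sum_{v\in S}\lambda_{\cdot,v}$ and the characteristic function $T_{f,A}(r)$ by the height $h_A(x)$. The two ingredients needed are (i) the closed-subscheme-to-Cartier-divisor reduction via blowing-up, exactly as in the Nevanlinna proof, and (ii) the arithmetic counterpart Theorem \ref{T5.1} for Cartier divisors with distributive constant, which plays the role of Theorem \ref{T1.6}.

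First, I would fix $\epsilon>0$ and choose a rational $\delta>0$ and a small auxiliary rational $\delta'>0$ exactly as in the proof of Theorem \ref{T1.7}, so that for each $v\in S$ and each $i=1,\ldots,q$ the divisor $(1+\delta)A-(\epsilon_i+\delta')E_{i,v}$ is $\mathbb{Q}$-ample on the blow-up $\pi_{i,v}\colon \widetilde{X}_{i,v}\to X$ along $Y_{i,v}$, where $\epsilon_i$ approximates the Seshadri constant $\epsilon_{E_{i,v}}(A)$ from below. Pick an integer $N$ making $N(1+\delta)A$ and $N[(1+\delta)A-(\epsilon_i+\delta')E_{i,v}]$ very ample. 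Then, by the inductive construction used in the proof of Theorem \ref{T1.7} (which is purely algebro-geometric and place-independent), for each $v\in S$ I obtain effective Cartier divisors $F_{1,v},\ldots,F_{q,v}$ on $X$ defined over $k$ such that $F_{i,v}\sim N(1+\delta)A$, $\pi_{i,v}^{*}F_{i,v}\geq N(\epsilon_i+\delta')E_{i,v}$, and $\{F_{1,v},\ldots,F_{q,v}\}$ is in $\ell$-subgeneral position with index $\kappa$ on $X$. Lemma \ref{L5.1} is used here exactly as before to descend sections from $\widetilde{X}_{i,v}$ to $X$.

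Next, I would invoke Theorem \ref{T5.1}, applied for each $v\in S$ to the Cartier divisors $F_{1,v},\ldots,F_{q,v}$. By Remark \ref{R3.3}(ii) the distributive constant of these divisors satisfies $\Delta\leq (\ell-n+\kappa)/\kappa$, hence there is a Zariski-closed proper subset $Z\subset X$ such that for all $x\in X(k)\setminus Z$,
\begin{equation*}
\sum_{v\in S}\sum_{i=1}^{q}\lambda_{F_{i,v},v}(x)\leq \Bigl[\tfrac{\ell-n+\kappa}{\kappa}(n+1+\delta)\Bigr]\,h_{N(1+\delta)A}(x)+O(1).
\end{equation*}
Then, applying Lemma \ref{L2.5} together with functoriality, additivity, and boundedness from below of local Weil functions to the relation $\pi_{i,v}^{*}F_{i,v}\geq N(\epsilon_i+\delta')E_{i,v}$, I get $\lambda_{F_{i,v},v}(x)\geq N\epsilon_{Y_{i,v}}(A)\,\lambda_{Y_{i,v},v}(x)+O(1)$ for all points $x\in X(k)$ lying outside $\operatorname{Supp}Y_{i,v}$. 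Dividing by $N$, using $h_{N(1+\delta)A}=N(1+\delta)h_A$, and reabsorbing the choice of $\delta$ into $\epsilon$ yields the desired inequality.

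The main obstacle is the construction of the $F_{i,v}$ with the three required properties, i.e.\ the algebro-geometric inductive step that turns $\ell$-subgeneral position with index $\kappa$ of the $Y_{i,v}$ into the same position condition on the $F_{i,v}$, while simultaneously ensuring the bound $\pi^{*}F_{i,v}\geq N(\epsilon_i+\delta')E_{i,v}$; this requires combining Lemma \ref{L3.5} (to control the dimension of intersections after adding a hypersurface section) with Lemma \ref{L5.1} (to descend sections from the blow-up to $X$), all while keeping track that $\pi_{i,v}$ is an isomorphism away from $Y_{i,v}$ so that the incidence conditions transfer correctly. Everything else is a formal translation of the Nevanlinna argument through Vojta's dictionary and should proceed without surprise.
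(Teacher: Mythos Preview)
Your proposal is correct and follows essentially the same route as the paper: the paper explicitly omits the proof of Theorem \ref{T5.2}, remarking only that it is obtained by carrying the argument for Theorem \ref{T1.7} through Vojta's dictionary, with Theorem \ref{T5.1} playing the role of Theorem \ref{T1.6}. Your write-up does precisely this, including the same inductive construction of the auxiliary divisors $F_{i,v}$ via Lemmas \ref{L3.5} and \ref{L5.1}, the bound $\Delta\le(\ell-n+\kappa)/\kappa$ from Remark \ref{R3.3}(ii), and the Weil-function comparison $\lambda_{F_{i,v},v}\ge N\epsilon_{Y_{i,v}}(A)\lambda_{Y_{i,v},v}+O(1)$; one minor clarification is that Theorem \ref{T5.1} is invoked once (its statement already sums over $v\in S$), with the ample divisor taken to be $N(1+\delta)A$ so that $\epsilon_{F_{i,v}}(N(1+\delta)A)\ge 1$ and the Seshadri factors in Theorem \ref{T5.1} can be dropped.
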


Theorem \ref{T5.2} gives a generalization of Theorem 1.3 in \cite{HL21} and the Main Theorem (Arithmetic Part) in \cite{HR21}.

\subsection*{Acknowledgements} This work was supported by National Nature Science Funds of China (No. 11871260 and No. 12061041). The authors thank to the anonymous referee's for his or her suggestions to improve this paper.
\normalsize


\begin{thebibliography}{99}
\bibitem{Ca33} H. Cartan, \textit{Sur les z\'eros des combinaisons  lin\'eaires de p fonctions holomorphes donn\'ees}, Mathematica \textbf{7}(1939), 80-103.
\bibitem{Har77} R. Hartshorne, \textit{Algebraic geometry}, Springer-Verlag, New York, 1977.
\bibitem{HR21} Y. He, M. Ru, \textit{A generalized subspace theorem for closed subschemes in subgeneral position}, J. Number Theory \textbf{229}(2021), 125-141.
\bibitem{HL21} G. Heier, A. Levin, \textit{A generalized Schmidt subspace theorem for closed subschemes}, AM J. Math. \textbf{143}(2021), no. 1, 213-226.
\bibitem{JYY19} Q. Ji, Q. Yan, G. Yu, \textit{Holomorphic curves into algebraic varieties intersecting divisors in subgeneral position}, Math. Ann.  \textbf{373}(2019), no. 3, 1457-1483.
\bibitem{La17} R. K. Lazarsfeld, \textit{Positivity in algebraic geometry I: Classical setting: line bundles and linear series}, Springer, 2017.
\bibitem{MR15} D. McKinnon, D. Roth, \textit{Seshadri constants, Diophantine approximation, and Roths theorem for arbitrary varieties}, Invent.  Math. \textbf{200}(2015), no. 2, 513-583.
\bibitem{No83} E. I. Nochka, \textit{On the theory of meromorphic functions}, Dokl, Akad. Nauk, SSSR, \textbf{269}(1983), 547-552.
\bibitem{Qu19} S. D. Quang, \textit{A generalization of the subspace theorem for higher degree polynomials in subgeneral position}, Int. J.  Number Theory \textbf{15}(2019), no. 4, 775-788.
\bibitem{Qu20} S. D. Quang, \textit{Generalizations of degeneracy second main theorem and Schmidt's subspace theorem}, Pacific J. Math. \textbf{318}(2022), no. 1, 153-188.
\bibitem{Ru09} M. Ru, \textit{Holomorphic curves into algebraic varieties}, Ann. Math. \textbf{169}(2009), 255-267.
\bibitem{ru} M. Ru, \textit{Nevanlinna Theory and Its Relation to Diophantine Approximation}, World Scientific Publishing, 2021.
\bibitem{RW17} M. Ru, J. Wang, \textit{A subspace theorem for subvarieties}, Algebra Number Theory \textbf{11}(2017), no. 10, 2323-2337.
\bibitem{Sh13} I. R. Shafarevich, \textit{Basic algebraic geometry I}, Springer Science and Business Media, 2013.
\bibitem{Si87} J. H. Silverman, \textit{ Arithmetic distance functions and height functions in Diophantine geometry}, Math. Ann. \textbf{279}(1987), no. 2, 193-216.
\bibitem{VP87} P. Vojta, \textit{Diophantine approximations and value distribution theory}, Lecture Notes in Mathematics, 1239, Springer-Verlag, Berlin, 1987.
\bibitem{VP09} P. Vojta, \textit{Diophantine approximation and Nevanlinna theory}, Arithmetic geometry, 111-224, Lecture Notes in Math, 2009, Springer, Berlin, 2011.
\end{thebibliography}
\end{document}